\title[Action is linking]{On the relation between action and linking}
\author[Bechara Senior]{David Bechara Senior}
\author[Hryniewicz]{Umberto L. Hryniewicz}
\author[Salom\~ao]{Pedro A. S. Salom\~ao}
\date{\today}
\address{David Bechara Senior \\ Ruhr-Universit\"at Bochum, Universit\"atsstrasse 150, IB 3/79, Bochum D-44801, Germany}
\email{david.becharasenior@ruhr-uni-bochum.de}
\address{Umberto L. Hryniewicz \\ RWTH Aachen, Jakobstrasse 2, Aachen 52064, Germany}
\email{hryniewicz@mathga.rwth-aachen.de}
\address{Pedro A. S. Salom\~ao, Instituto de Matem\'atica e Estat\'istica, Departamento de Matem\'atica, Universidade de S\~ao Paulo, Rua do Mat\~ao, 1010, Cidade Universit\'aria, S\~ao Paulo SP, Brazil 05508-090}
\email{psalomao@ime.usp.br}
\address{NYU-ECNU Institute of Mathematical Sciences at NYU Shanghai, 3663 Zhongshan Road North, Shanghai, 200062, China}
\email{pas383@nyu.edu}
\newcommand{\C}{\mathbb{C}}
\newcommand{\R}{\mathbb{R}}
\newcommand{\Z}{\mathbb{Z}}
\newcommand{\D}{\mathbb{D}}
\newcommand{\vol}{\mathrm{vol}}
\newcommand{\link}{\mathrm{link}}
\newcommand{\inter}{\mathrm{int}}
\newcommand{\leb}{\mathrm{Leb}}
\newcommand{\cal}{\mathrm{CAL}}
\newcommand{\T}{\mathbb{T}}
\newcommand{\U}{\mathcal{U}}
\newcommand{\supp}{{\rm supp}}
\theoremstyle{plain}
\newtheorem{theorem}{\sc Theorem}[section]
\newtheorem{proposition}[theorem]{\sc Proposition}
\newtheorem{lemma}[theorem]{\sc Lemma}
\newtheorem{corollary}[theorem]{\sc Corollary}
\theoremstyle{definition}
\newtheorem{definition}[theorem]{\sc Definition}
\theoremstyle{remark}
\newtheorem{remark}[theorem]{\sc Remark}
\begin{document}

\begin{abstract}
We introduce numerical invariants of contact forms in dimension three and use asymptotic cycles to estimate them. As a consequence, we prove a version for Anosov Reeb flows of results due to Hutchings and Weiler on mean actions of periodic points. The main tool is the Action-Linking Lemma, expressing the contact area of a surface bounded by periodic orbits as the Liouville average of the asymptotic intersection number of most trajectories with the surface.
\end{abstract}

\maketitle


\section{Introduction}

This paper aims to explore elementary ergodic methods to study contact forms from a quantitative point of view. We introduce numerical invariants of a systolic flavor that provide the language to generalise results due to Hutchings~\cite{mean} and Weiler~\cite{weiler} on mean actions of periodic points. They use embedded contact homology (ECH) to prove their results. Here we explore the equidistribution of periodic orbits and use classical tools in Dynamical Systems, such as Sigmund's results for Axiom A flows~\cite{sigmund}, to prove analogous statements for Anosov Reeb flows in dimension three. We also explain the use of Irie's equidistribution theorem~\cite{Irie_eq} to check the hypotheses of our statements for $C^\infty$-generic Reeb flows.

The paper is organised as follows. The result for Anosov Reeb flows is Theorem~\ref{thm_anosov}. The application to $C^\infty$-generic Reeb flows relying on~\cite{Irie_eq} is explained in Remark~\ref{rmk_Irie_eq}. These applications use Theorem~\ref{thm_eq_2}, which generalises Theorem~\ref{thm_eq_1} from $S^3$ to the general $3$-manifold, and is proved in section~\ref{sec_proof_thm}. The arguments are based on the Action-Linking Lemma (Lemma~\ref{lem_action_linking}) proved via asymptotic cycles in section~\ref{sec_proof_action_link}. The connection to the results from~\cite{mean,weiler} is discussed in subsection~\ref{ssec_HW}. Finally, in section~\ref{sec_toric} we study Reinhardt domains (toric domains).

\subsection{Set-up and main results}

A contact form on a $3$-manifold $M$ is a $1$-form~$\lambda$ such that $\lambda \wedge d\lambda$ defines a volume form. The associated Reeb vector field $X$ is implicitly determined by $i_Xd\lambda = 0$, $i_X\lambda = 1$. We call the pair $(M,\lambda)$ a contact-type energy level. It is always assumed that $M$ is compact, that $X$ is tangent to $\partial M$ in case $\partial M \neq \emptyset$, and that $M$ is oriented by $\lambda \wedge d\lambda$. The flow of $X$ is called the Reeb flow.

A periodic orbit $\gamma$ of the Reeb vector field consists of the data of a Reeb trajectory that is periodic and one of its positive periods, not necessarily the primitive one. The period is denoted by $T(\gamma)$. Sometimes we might think of $\gamma$ as a (possibly) multiply covered, oriented knot that is positively tangent to the Reeb vector field. The contact volume of $\lambda$ is defined as
\begin{equation}
\vol(\lambda) = \int_M \lambda \wedge d\lambda.
\end{equation}
Systolic geometry of contact forms is mainly concerned with the relations between periods of closed Reeb orbits and the contact volume.

\begin{definition}\label{def_syst_invs}
Let $\lambda$ be a contact form on $S^3$. Given two geometrically distinct periodic Reeb orbits $\gamma_1,\gamma_2$ their \textit{systolic pairing} $\rho(\gamma_1,\gamma_2)$ is defined as
\begin{equation}\label{systolic_pairing}
\rho(\gamma_1,\gamma_2) = \frac{\link(\gamma_1,\gamma_2)\vol(\lambda)}{T(\gamma_1)T(\gamma_2)}
\end{equation}
where $\link$ denotes the linking number. The \textit{systolic interval} of $\lambda$ is defined as
\begin{equation}
I(\lambda) = [\rho_-(\lambda),\rho_+(\lambda)] \qquad \rho_-(\lambda) = \inf_{\gamma_1,\gamma_2} \rho(\gamma_1,\gamma_2)\ \ \ \rho_+(\lambda) = \sup_{\gamma_1,\gamma_2} \rho(\gamma_1,\gamma_2)
\end{equation}
where $\sup$ and $\inf$ are taken over pairs of geometrically distinct periodic orbits. The \textit{systolic norm} of $\lambda$ is defined as the length of the systolic interval
\begin{equation}
\|\lambda\|_{\rm sys} = \rho_+(\lambda) - \rho_-(\lambda).
\end{equation}
\end{definition}


Note that $\rho(\gamma_1,\gamma_2)$ is invariant under iterations of periodic orbits and rescaling of the contact form. The existence of at least two periodic orbits is taken for granted in Definition~\ref{def_syst_invs}. The existence of two or more periodic Reeb orbits in a general closed $3$-manifold is proved via ECH in~\cite{CGH}. An argument based on linearised contact homology for the same result in the specific case of the tight $3$-sphere can be found in~\cite{GHHM}.

\begin{remark}[Finiteness of the systolic norm]
Consider any smooth non-singular vector field on $S^3$ and let $\mathscr{P}$ be the set of its invariant Borel probability measures equipped with the weak* topology. In~\cite{ghys} Ghys introduced and briefly sketched the theory of the quadratic linking form $Q:\mathscr{P} \times\mathscr{P} \to \R$. See also the lecture notes~\cite{dehornoy} by Dehornoy. It turns out that if $\gamma_j:\R/T(\gamma_j)\Z \to S^3$ ($j=1,2$) are geometrically distinct periodic orbits
then
\[
Q\left( \frac{(\gamma_1)_*\leb}{T(\gamma_1)} , \frac{(\gamma_2)_*\leb}{T(\gamma_2)} \right) = \frac{\link(\gamma_1,\gamma_2)}{T(\gamma_1)T(\gamma_2)}
\]
where $\leb$ denotes Lebesgue measure. This identity follows from the definition of the quadratic linking form explained in~\cite[subsection~3.1, page 52]{ghys} in the case of two ergodic measures not supported on the same periodic orbit. In the same reference it is stated that $Q$ is continuous. From the continuity of $Q$ and the compactness of $\mathscr{P}$ it follows that $Q$ is bounded from above and below. This proves the following statement: {\it ``The continuity of the quadratic linking form implies that $\rho_+(\lambda)$ and $\rho_-(\lambda)$ are finite.''} We do not address existence or continuity of the quadratic linking form here, we only note that in the light of~\cite{ghys} one expects all contact forms on $S^3$ to have finite systolic norm. Our results do not use this fact.
\end{remark}

It is instructive to study the systolic norm of Reinhardt domains, also known as toric domains. Let $\R^4$ be equipped with coordinates $(x_1,y_1,x_2,y_2)$ and standard symplectic form $\omega_0 = \sum_j r_j dr_j \wedge d\theta_j$ written in polar coordinates $x_j=r_j\cos\theta_j$, $y_j=r_j\sin\theta_j$. Let $F:\R^2\to[0,+\infty)$ be a $1$-homogeneous function, smooth and positive away from the origin. If we set $H = F(r_1^2,r_2^2)$ then $W = \{ H \leq 1\}$ is a smooth domain and the Liouville form $\lambda_0 = \frac{1}{2} \sum_j r_j^2d\theta_j$ defines a contact form on $\partial W \simeq S^3$. The proof of the next proposition is given in section~\ref{sec_toric}.

\begin{proposition}\label{prop_toric}
Among boundaries of Reinhardt domains $\rho_- \leq 1 \leq \rho_+$ always holds and, moreover, the systolic norm vanishes precisely for ellipsoids.
\end{proposition}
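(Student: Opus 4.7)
The plan is to exploit the toric symmetry to write every periodic Reeb orbit in closed form and to show that, when averaged suitably, the systolic pairing has mean $1$. In action--angle coordinates $(u_j,\theta_j)$ with $u_j=r_j^2/2$, $\partial W$ fibers over the moment region $\Omega=\{F\leq 1/2\}\cap\R_{\geq 0}^2$ via $(u_1,u_2)\in\partial\Omega$, a curve running from $(b_1,0)$ to $(0,b_2)$. The Reeb vector field on the Liouville torus $T_P$ over $P=(c_1,c_2)\in\partial\Omega$ is parallel to $F_{u_1}(P)\partial_{\theta_1}+F_{u_2}(P)\partial_{\theta_2}$, so $T_P$ carries $(p,q)$-periodic orbits iff $F_{u_1}(P):F_{u_2}(P)$ is rational, and such orbits have period $T=2\pi(pc_1+qc_2)$. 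A Stokes computation gives $\vol(\lambda_0)=8\pi^2\,\mathrm{Area}(\Omega)$.

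The next step is a linking formula: for two orbits $\beta_1,\beta_2$ on nested tori $T_{P_1},T_{P_2}$ with $P_1$ preceding $P_2$ along $\partial\Omega$ (i.e.\ $c_2^{(1)}<c_2^{(2)}$), one has $\link(\beta_1,\beta_2)=p_1q_2$. This follows because $\beta_1$ is homologous to $p_1\gamma_1$ inside the closed solid torus $V_1\subset S^3$ bounded by $T_{P_1}$ and containing the axis $\gamma_1$, so $\beta_1-p_1\gamma_1$ bounds a $2$-chain in $V_1$ disjoint from $\beta_2\subset S^3\setminus V_1$; combining with the direct computation $\link(\gamma_1,\beta_2)=q_2$ via the Seifert disk $\{\theta_2=0\}\cap\partial W$ yields $\link(\beta_1,\beta_2)=p_1q_2$. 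Substituting into~\eqref{systolic_pairing} and using Euler's identity $\langle\nabla F(P),P\rangle=F(P)=1/2$ on $\partial\Omega$, the winding-number factors cancel and produce the clean expression
\[
\rho(\beta_1,\beta_2)=8\,\mathrm{Area}(\Omega)\,F_{u_1}(P_1)\,F_{u_2}(P_2).
\]

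The averaging is now direct. Parametrize $\partial\Omega$ by $s\in[0,L]$ via $\phi'(s)=(-F_{u_2},F_{u_1})\circ\phi(s)$ with $\phi(0)=(b_1,0)$; applying Green's theorem to $(c_1\,dc_2-c_2\,dc_1)$ along $\partial\Omega$ yields $L=4\,\mathrm{Area}(\Omega)$, and applying it to $c_2\,dc_1$ yields
\[
\iint_{0\leq s<s'\leq L}F_{u_1}(\phi(s))\,F_{u_2}(\phi(s'))\,ds\,ds'=\mathrm{Area}(\Omega).
\]
Since the triangle $\{s<s'\}$ has area $L^2/2=8\,\mathrm{Area}(\Omega)^2$, the continuous function $(s,s')\mapsto 8\,\mathrm{Area}(\Omega)F_{u_1}(\phi(s))F_{u_2}(\phi(s'))$ has mean exactly $1$.

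For nonlinear $F$, the slope $P\mapsto F_{u_1}(P)/F_{u_2}(P)$ is a nonconstant continuous function on $\partial\Omega$, hence takes rational values densely by the intermediate value theorem; the $\rho$-formula is then realised by genuine Reeb orbit pairs on a dense subset of $\partial\Omega\times\partial\Omega$, and the mean-$1$ property forces $\rho_-(\lambda_0)\leq 1\leq\rho_+(\lambda_0)$. For the equality case, $\|\lambda_0\|_{\rm sys}=0$ combined with the inequality forces $\rho_-=\rho_+=1$, hence $F_{u_1}(P_1)F_{u_2}(P_2)$ must be constant on $\{P_1<P_2\}$; this in turn makes $F_{u_1}$ and $F_{u_2}$ constant on $\partial\Omega$, hence constant on $\R_{>0}^2$ by $0$-homogeneity, so $F$ is linear and $\partial W$ is an ellipsoid. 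The converse is immediate from the explicit formula, which gives $\rho\equiv 1$ whenever $F$ is linear. I expect the linking formula $\link(\beta_1,\beta_2)=p_1q_2$ to be the most delicate step; the rest is direct toric bookkeeping with Euler's identity and Green's theorem.
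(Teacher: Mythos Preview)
Your proof is correct and follows essentially the same route as the paper: both derive the closed formula $\rho(\beta_1,\beta_2)=2A\,D_1F|_{\beta_1}\,D_2F|_{\beta_2}$ (in the paper's normalisation) from the linking identity $\link(\beta_1,\beta_2)=p_1q_2$ on nested tori, parametrise the profile curve by the Hamiltonian flow of $F$, and finish with an averaging argument together with density of rational tori. The one notable difference is in the averaging: you integrate over the ordered triangle $\{s<s'\}$ and obtain mean exactly~$1$, which gives $\rho_-\le 1\le\rho_+$ in one stroke, whereas the paper integrates over the full square $[0,2A]^2$, obtains mean $ab/(2A)=1/\rho(\gamma_1,\gamma_2)$, and then splits into the cases $\rho(\gamma_1,\gamma_2)\gtrless 1$; your packaging is slightly cleaner but not a different method.
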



\noindent {\sc Question 1.} {\it What kinds of Reeb dynamics are allowed for contact forms $\lambda$ on~$S^3$ satisfying $\|\lambda\|_{\rm sys}=0$?} 

\bigskip

\noindent {\sc Question 2.} {\it Is it true that for a Reeb flow on~$S^3$ with only two periodic orbits the contact volume coincides with the product of the periods?}

\bigskip

\noindent {\sc Question 3.} {\it Is the set of values $\rho(\gamma,\gamma')$, when $(\gamma,\gamma')$ varies over pairs of geometrically distinct periodic orbits, dense in the systolic interval?}

\bigskip

Later we will transport the above definitions to an arbitrary $(M,\lambda)$, at which point we will be in position to explain why inequalities such as $1\leq\sup_\gamma\rho(\gamma_0,\gamma)$, with $\gamma_0$ fixed {\it a priori}, are precisely versions for Reeb flows on $S^3$ of the results from~\cite{mean,weiler}. We are then led to ask the following questions. 

\bigskip

\noindent {\sc Question 4.} {\it Does $1\in I(\lambda)$ hold for every contact form $\lambda$ on the $3$-sphere?} 

\bigskip

\noindent {\sc Question 5.} {\it More generally, do the systolic 
inequalities $$ \inf_{\gamma \subset S^3\setminus\gamma_0} \rho(\gamma_0,\gamma) \leq 1 \leq \sup_{\gamma \subset S^3\setminus\gamma_0} \rho(\gamma_0,\gamma) $$ hold
for every periodic Reeb orbit $\gamma_0$ of an arbitrary contact form on $S^3$?}

\bigskip

\begin{remark}\label{rmk_H}
Hutchings suggested to enlarge the systolic interval by considering also the values of the quadratic linking form at certain points of the diagonal, namely $\rho(\gamma,\gamma) = {\rm rot}_0(\gamma)\vol(\lambda)T(\gamma)^{-2}$. Here ${\rm rot}_0$ denotes the transverse rotation number computed in a Seifert framing. Let us refer to this interval as the {\it enlarged systolic interval}. Hutchings then explained that the result from~\cite{asymptotic_ech} can be combined with arguments similar to those from~\cite{hutchings_ruelle} to conclude that~$1$ belongs to the enlarged systolic interval. We believe it to be an interesting question to decide if the enlarged systolic interval always coincides with the systolic interval. This is true for Reinhardt domains; see Remark~\ref{rmk_rot_numbers}.
\end{remark}

\begin{remark}
Note that $\rho_\pm$ induce numerical invariants of symplectic fillings of the tight three-sphere. In fact, let $(W,\omega)$ be a symplectic $4$-manifold with boundary, and denote by $\iota:\partial W \to W$ the inclusion map. We orient $W$ by $\omega \wedge \omega$, and $\partial W$ gets the boundary orientation. Assume that there is a $1$-form $\alpha$ defined near $\partial W$ and an orientation preserving diffeomorphism $\phi : S^3 \to \partial W$ such that $d\alpha=\omega$, and $\lambda:=(\iota\circ\phi)^*\alpha$ satisfies $\xi_\mathrm{std} = \ker\lambda$. Here $\xi_\mathrm{std}$ denotes the standard contact structure on $S^3$, and $S^3$ is oriented by $\xi_{\rm std}$. Then we can define $\rho_\pm(W,\omega)=\rho_\pm(\lambda)$. It turns out that the numbers $\rho_\pm(W,\omega)$ do not depend on $\alpha$ and $\phi$, and if $(W_1,\omega_1)$ is symplectomorphic to $(W_2,\omega_2)$ then $\rho_\pm(W_1,\omega_1) = \rho_\pm(W_2,\omega_2)$.
\end{remark}

\begin{remark}\label{rmk_systolic_4}
Invariants $s(\lambda),S(\lambda)$ of a contact form $\lambda$ on the tight $3$-sphere were introduced in~\cite[page 2647]{compositio}. These numbers are defined in terms of rotation numbers of periodic orbits and the minimal action $T_{\rm min}(\lambda)$. In~\cite[Theorem~1.2]{compositio} one finds an existence statement of contact forms with systolic ratio $T_{\rm min}(\lambda)^2/\vol(\lambda)$ close to $n$, such that the length $\Delta(\lambda) = S(\lambda)-s(\lambda)$ of the interval $[s(\lambda),S(\lambda)]$ is close to~$n^2$. It is expected that this behaviour is sharp for the relation between the systolic ratio and $\Delta(\lambda)$. In particular, it is expected that $\Delta(\lambda)$ must always diverge together with the systolic ratio. The case $n=2$ provides dynamically convex examples with systolic ratio close to $2$.
\end{remark}

Let $(M,\lambda)$ be a contact-type energy level. A finite collection $\alpha = \{\gamma_j\}$ of periodic Reeb orbits is called an orbit set\footnote{This terminology slightly conflicts with the one used in ECH.}. Each $\gamma_j$ can be also seen as a map $\gamma_j:\R/T(\gamma_j)\Z\to M$, where $T(\gamma_j)$ is the (not necessarily primitive) period of $\gamma_j$. Let $\mu$ be an invariant Borel probability measure. We shall say that $\mu$ can be approximated by periodic orbits if there exists a sequence of orbit sets $\alpha_n=\{\gamma_j^n\}$ and a sequence of positive weights $\{p^n_j\}$ satisfying $\sum_jp^n_j=1 \ \forall n$ and $$ \lim_{n\to\infty} \sum_j p^n_j \frac{(\gamma_j^n)_*\leb}{T(\gamma_j^n)} = \mu $$ as measures. The Liouville measure is the measure induced by $\lambda\wedge d\lambda$, and the normalised Liouville measure $\mu_\lambda$ is the one induced by $\frac{\lambda\wedge d\lambda}{\vol(\lambda)}$. We shall briefly say that the Liouville measure is approximated by periodic orbits if so is $\mu_\lambda$ in the sense explained above.

\begin{theorem}\label{thm_eq_1}
If the Liouville measure on $(S^3,\lambda)$ can be approximated by periodic orbits, then for every closed Reeb orbit $\gamma_0$ and every $\epsilon>0$ the sets
\begin{equation}
\overline{\bigcup_{\substack{\gamma \subset S^3 \setminus \gamma_0 \\ \rho(\gamma,\gamma_0)\geq1-\epsilon}} \gamma} \qquad \qquad \overline{\bigcup_{\substack{\gamma \subset S^3 \setminus \gamma_0 \\ \rho(\gamma,\gamma_0)\leq1+\epsilon}} \gamma}
\end{equation}
have positive Liouville measure. In particular, $1\in I(\lambda)$ and
\begin{equation}\label{syst_ineq_3sphere}
\inf_{\gamma\subset S^3\setminus\gamma_0} \rho(\gamma,\gamma_0) \leq 1 \leq \sup_{\gamma\subset S^3\setminus\gamma_0} \rho(\gamma,\gamma_0)
\end{equation}
holds for every closed Reeb orbit $\gamma_0$.
\end{theorem}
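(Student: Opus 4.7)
The plan is to apply the Action-Linking Lemma~\ref{lem_action_linking} to a Seifert surface $\Sigma$ of $\gamma_0$ and to exploit the hypothesis that $\mu_\lambda$ is the weak-$*$ limit of convex combinations $\mu_n=\sum_j p^n_j\,(\gamma^n_j)_*\leb/T(\gamma^n_j)$ of periodic orbit measures.

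First, since $S^3$ is simply connected I fix an embedded Seifert surface $\Sigma$ for $\gamma_0$ transverse to the Reeb flow on $\inter\Sigma$. By Stokes $\int_\Sigma d\lambda=T(\gamma_0)$, so Lemma~\ref{lem_action_linking} reads $T(\gamma_0)=\int_{S^3}\iota(p,\Sigma)\,d(\lambda\wedge d\lambda)$, where $\iota(\cdot,\Sigma)$ is the asymptotic intersection number with $\Sigma$. On any periodic orbit $\gamma\neq\gamma_0$ the function $\iota(\cdot,\Sigma)$ is constant with value $\link(\gamma,\gamma_0)/T(\gamma)$, so
$$
\int_{S^3}\iota(\cdot,\Sigma)\,d\mu_n=\frac{T(\gamma_0)}{\vol(\lambda)}\sum_j p^n_j\,\rho(\gamma^n_j,\gamma_0).
$$

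The central analytic step is to let $n\to\infty$ and conclude $\sum_j p^n_j\,\rho(\gamma^n_j,\gamma_0)\to 1$. The obstacle is that $\iota(\cdot,\Sigma)$ is only defined $\mu_\lambda$-a.e.\ and is generically discontinuous, so weak-$*$ convergence does not immediately transport the displayed identity. I would handle this by revisiting the Action-Linking argument: replace $\iota$ by the continuous time averages $\iota_T(p,\Sigma)=T^{-1}\int_0^T\chi_\Sigma\circ\phi_s(p)\,ds$ of a smooth signed bump $\chi_\Sigma$ concentrated in a thin tubular neighborhood of $\Sigma$. For each fixed $T$ one has $\int\iota_T\,d\mu_n\to\int\iota_T\,d\mu_\lambda$, and the Action-Linking machinery controls $\int(\iota-\iota_T)\,d\mu$ uniformly among the approximating measures as $T\to\infty$, so the two limits can be exchanged. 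The same scheme applies to $\chi\iota$ for any continuous $\chi\colon S^3\to[0,1]$.

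Finally, I extract positive Liouville measure by contradiction. Suppose $\mu_\lambda(A^+_\epsilon)=0$. Closedness of $A^+_\epsilon$ and the portmanteau theorem force $\mu_n(A^+_\epsilon)\to 0$, so the $p$-weight on approximating orbits with $\rho\geq 1-\epsilon$ tends to zero. Since orbits with $\rho<1-\epsilon$ contribute at most $1-\epsilon$ to the weighted average $\sum_j p^n_j\rho(\gamma^n_j,\gamma_0)\to 1$, the partial sum $\sum_{\rho(\gamma^n_j,\gamma_0)\geq 1-\epsilon}p^n_j\rho(\gamma^n_j,\gamma_0)$ exceeds $\epsilon-o(1)$. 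Choosing a continuous bump $\chi$ equal to $1$ on $A^+_\epsilon$ and supported in an open neighborhood $U$ of $A^+_\epsilon$ of arbitrarily small Liouville measure, this lower bound becomes $\int\chi\iota^+(\cdot,\Sigma)\,d\mu_n\geq(\epsilon-o(1))T(\gamma_0)/\vol(\lambda)$; the second step applied to $\chi\iota^+$ yields $\int\chi\iota^+\,d\mu_\lambda\geq\epsilon\,T(\gamma_0)/\vol(\lambda)$, contradicting $\int_U\iota^+\,d\mu_\lambda\to 0$ as $\mu_\lambda(U)\to 0$ (which uses $\iota\in L^1(\mu_\lambda)$). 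The case of $A^-_\epsilon$ is symmetric, and $1\in I(\lambda)$ together with the displayed inequalities follows from non-emptiness of both closed sets. The main obstacle is the second step, rigorously passing to the limit against the discontinuous integrand $\iota(\cdot,\Sigma)$.
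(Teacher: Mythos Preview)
Your overall strategy---Action-Linking Lemma plus weak-$*$ approximation plus contradiction---matches the paper's, but the obstacle you flag in the second step is real and you have not resolved it. Your proposal to replace $\iota(\cdot,\Sigma)$ by Birkhoff averages $\iota_T$ of a smooth bump $\chi_\Sigma$ does not work as stated: for any invariant probability $\mu$ one has $\int\iota_T\,d\mu=\int\chi_\Sigma\,d\mu$, so time-averaging buys nothing, and for a generic bump $\int\chi_\Sigma\,d\mu_\gamma$ is \emph{not} equal to $\inter(\gamma,\Sigma)/T(\gamma)$. Your transversality hypothesis on $\inter\Sigma$ is an attempt to force this equality via a flow-box, but such a Seifert surface need not exist for an arbitrary periodic Reeb orbit on $S^3$, and transversality necessarily fails near $\partial\Sigma=\gamma_0$ anyway.

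The paper closes this gap with a device already built into the proof of the Action-Linking Lemma: after blowing up $\gamma_0$ one constructs a smooth \emph{closed} $1$-form $\beta_\delta$ on $S^3\setminus\gamma_0$ representing the class $\Sigma^*$, with $i_X\beta_\delta$ bounded and continuous. Closedness is the point: it guarantees $\int_\gamma\beta_\delta=\inter(\gamma,\Sigma)$ exactly for every loop $\gamma$, hence $\int i_X\beta_\delta\,d\mu_\gamma=\inter(\gamma,\Sigma)/T(\gamma)$ with no error term and no transversality needed. Since $i_X\beta_\delta$ is continuous and bounded, weak-$*$ convergence applies directly. The paper's contradiction is then much shorter than yours: if the closure $\overline{P_\epsilon}$ of orbits with $\rho\ge 1-\epsilon$ is null, its open invariant complement $U_\epsilon$ has full measure; a short portmanteau argument (the paper's ``Claim'') shows the total weight of approximating orbits contained in $U_\epsilon$ tends to $1$, so one may assume all $\gamma^n_j\subset U_\epsilon$, whence all $\rho(\gamma^n_j,\gamma_0)<1-\epsilon$, and the chain
\[
\frac{T(\gamma_0)}{\vol(\lambda)}=\int i_X\beta_\delta\,d\mu_\lambda=\lim_n\sum_j p^n_j\,\frac{\inter(\gamma^n_j,\Sigma)}{T(\gamma^n_j)}\le(1-\epsilon)\,\frac{T(\gamma_0)}{\vol(\lambda)}
\]
gives the contradiction. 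Your more elaborate argument with $\chi\iota^+$ and shrinking neighbourhoods is unnecessary once one has the right continuous test function.
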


Theorem~\ref{thm_eq_1} provides evidence for a positive answer to questions~4 and~5. The reason is~\cite[Corollary~1.4]{Irie_eq} asserting that for a $C^\infty$-generic Reeb flow the Liouville measure can be approximated by periodic orbits. Hence, $C^\infty$-generically the conclusions of Theorem~\ref{thm_eq_1} hold, and questions~4 and~5 have a positive answer.

The inequality $1\leq\rho_+(\lambda)$ is connected to a conjecture of Viterbo~\cite{viterbo}. A particular case of the conjecture asks for the inequality
\begin{equation}\label{viterbo_ineq}
1\leq \frac{\vol(\lambda)}{T_\mathrm{min}(\lambda)^2}
\end{equation}
to hold whenever the contact form on $S^3$ is induced by a convex energy level in a symplectic $4$-dimensional vector space. If there is a positive lower bound for the systolic pairings $\rho(\gamma_1,\gamma_2)$, as $\gamma_1,\gamma_2$ vary on special subsets of periodic orbits with a control on the linking number, then a systolic inequality follows. Moreover, equality in~\eqref{viterbo_ineq} should hold exactly when the contact form is Zoll, i.e. all Reeb trajectories are periodic and have the same primitive period. This case of the conjecture was verified in~\cite{abhs2} for convex sets on a $C^3$-neighbourhood of the round $3$-sphere in $\R^4$ with its standard symplectic form, but the proof is indirect and does not provide more information than~\eqref{viterbo_ineq}. In~\cite{BK} the correct analogue of the inequality~\eqref{viterbo_ineq} in more general $3$-manifolds was formulated, and then proved on a $C^3$-neighbourhood of Zoll contact forms. Recently a version of this result in any dimension appeared in~\cite{AB}.

Let us generalise the above discussion to all $3$-manifolds. Fix a contact-type energy level $(M,\lambda)$. An adapted Seifert surface in $(M,\lambda)$ is defined here to be a smoothly embedded\footnote{We assume clean intersections with $\partial M$.}, connected, orientable compact surface $\Sigma\hookrightarrow M$ such that $\partial \Sigma \setminus \partial M$ consists of periodic Reeb orbits, or is empty. We split the boundary of $\Sigma$ according to
\begin{equation}
\partial \Sigma = \dot\partial \Sigma \sqcup \partial_b\Sigma \qquad \text{where} \qquad \dot\partial \Sigma = \partial \Sigma \setminus\partial M, \ \partial_b\Sigma = \partial\Sigma \cap \partial M
\end{equation}
Note that there may be no choice of orientation for $\Sigma$ that orients all components of $\dot\partial\Sigma$ along the flow. Once an orientation of $\Sigma$ is fixed, its contact area is $$ T(\Sigma) = \int_\Sigma d\lambda. $$

\begin{definition}
If $\Sigma$ is an adapted Seifert surface in $(M,\lambda)$ and $\gamma$ is a periodic Reeb orbit in $M\setminus\dot\partial\Sigma$ then we define a \textit{systolic pairing}
\begin{equation}
\rho(\gamma,\Sigma) = \frac{\inter(\gamma,\Sigma)\vol(\lambda)}{T(\gamma)T(\Sigma)}
\end{equation}
provided $T(\Sigma)\neq0$.
\end{definition}

The number $\rho(\gamma,\Sigma)$ does not depend on the orientation of $\Sigma$. It is invariant under iterations of $\gamma$ and rescaling of $\lambda$.
Theorem~\ref{thm_eq_1} is a direct consequence of the next statement.

\begin{theorem}\label{thm_eq_2}
Let $\Sigma$ be an oriented, adapted Seifert surface in $(M,\lambda)$ such that $T(\Sigma) \neq 0$. If the Liouville measure can be approximated by periodic orbits then for every $\epsilon>0$ the sets
\begin{equation}\label{sets_of_per_orbits}
\overline{\bigcup_{\substack{\gamma \subset M \setminus \dot\partial\Sigma \\ \rho(\gamma,\Sigma)\geq1-\epsilon}} \gamma} 
\qquad \qquad \overline{\bigcup_{\substack{\gamma \subset M \setminus \dot\partial\Sigma \\ \rho(\gamma,\Sigma)\leq1+\epsilon}} \gamma}
\end{equation}
have positive Liouville measure. In particular, 
\begin{equation}\label{syst_ineq}
\inf_{\gamma\subset M\setminus\dot\partial\Sigma} \rho(\gamma,\Sigma) \leq 1 \leq \sup_{\gamma\subset M\setminus\dot\partial\Sigma} \rho(\gamma,\Sigma)
\end{equation}
holds.
\end{theorem}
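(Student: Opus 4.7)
The plan is to use Lemma~\ref{lem_action_linking} to reduce the statement to a soft weak-$*$ argument. The Action-Linking Lemma provides, from a $1$-form Poincar\'e dual to $\Sigma$, a bounded continuous function $f$ on $M\setminus\dot\partial\Sigma$ such that $\int_M f\, d\mu_\lambda = T(\Sigma)/\vol(\lambda)$ and $\int_M f\, d\mu_\gamma = \inter(\gamma,\Sigma)/T(\gamma)$ for every periodic Reeb orbit $\gamma\subset M\setminus\dot\partial\Sigma$, where $\mu_\gamma := (\gamma)_*\leb/T(\gamma)$. Because $T(\Sigma)\neq 0$ we may rescale to $\tilde f := f\cdot\vol(\lambda)/T(\Sigma)$, which satisfies $\int_M \tilde f\, d\mu_\lambda = 1$ and $\int_M \tilde f\, d\mu_\gamma = \rho(\gamma,\Sigma)$.

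Choose a periodic orbit approximation $\mu_n = \sum_j p_j^n\mu_{\gamma_j^n}\to\mu_\lambda$ of the normalised Liouville measure. Continuity and boundedness of $\tilde f$ together with weak-$*$ convergence give
\begin{equation*}
\sum_j p_j^n\,\rho(\gamma_j^n,\Sigma) \;=\; \int_M \tilde f\, d\mu_n \;\longrightarrow\; 1.
\end{equation*}

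I argue by contradiction for the ``left'' closure $K_\epsilon^- := \overline{\bigcup_{\rho(\gamma,\Sigma)\geq 1-\epsilon}\gamma}$; the case $\rho\leq 1+\epsilon$ is entirely symmetric. Suppose $\mu_\lambda(K_\epsilon^-)=0$. Since $K_\epsilon^-$ is closed, the Portmanteau theorem yields $\mu_n(K_\epsilon^-)\to 0$; setting $A_n := \{j:\rho(\gamma_j^n,\Sigma)\geq 1-\epsilon\}$, one has $\bigcup_{j\in A_n}\gamma_j^n\subset K_\epsilon^-$, hence $a_n := \sum_{j\in A_n}p_j^n\to 0$. The rescaled complementary measure $\tilde\mu_n := (1-a_n)^{-1}\sum_{j\notin A_n}p_j^n\mu_{\gamma_j^n}$ is again a periodic orbit approximation of $\mu_\lambda$, so the same argument gives $\int_M \tilde f\, d\tilde\mu_n\to 1$. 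However, the defining bound on $A_n$ forces $\int_M \tilde f\, d\tilde\mu_n\leq 1-\epsilon$, a contradiction. Thus $\mu_\lambda(K_\epsilon^-)>0$; non-emptiness of both closures for every $\epsilon>0$ then yields the inequalities~\eqref{syst_ineq}.

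The main obstacle is the first paragraph: constructing the continuous observable $\tilde f$ whose integrals record both the normalised contact area of $\Sigma$ and the systolic pairings $\rho(\gamma,\Sigma)$ of orbits disjoint from $\dot\partial\Sigma$. The second identity is essentially tautological once a Poincar\'e dual $1$-form for the relative class of $\Sigma$ is chosen; the genuine content is the Liouville identity, whose proof via Schwartzman-style asymptotic cycles is carried out in section~\ref{sec_proof_action_link}. Subtleties concerning the singular behaviour of the dual $1$-form near $\dot\partial\Sigma$ must also be handled, but since $\dot\partial\Sigma$ consists of periodic Reeb orbits with well-understood tubular neighbourhoods, they can be absorbed into the construction of $f$. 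Once Lemma~\ref{lem_action_linking} is in place, the remainder of the proof is an elementary weak-$*$ compactness manoeuvre that relies only on the normalisation of the weights $p_j^n$ and on the averaging bound for the orbits outside $A_n$.
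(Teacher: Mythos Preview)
Your proof is correct and takes essentially the same approach as the paper's: both extract from the proof of Lemma~\ref{lem_action_linking} the bounded observable $i_X\beta_\delta$ on $M\setminus\dot\partial\Sigma$ (your $f$), then argue by contradiction, discarding the orbits landing in the assumed null closed set and deriving $1\leq 1-\epsilon$ from the remaining convex combination. The paper packages the discarding step as a separate \emph{Claim} about restricting approximating orbit sets to an open invariant set of full Liouville measure, which in particular also removes any $\gamma_j^n$ lying on $\dot\partial\Sigma$ itself---a minor case your Portmanteau argument should likewise absorb into $A_n$.
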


The proof relies on the Action-Linking Lemma (Lemma~\ref{lem_action_linking}), and will be given in section~\ref{sec_proof_thm}. Our main application reads as follows.

\begin{theorem}\label{thm_anosov}
If the Reeb flow of $\lambda$ is Anosov, then for every adapted Seifert surface $\Sigma\subset M$ such that $T(\Sigma)\neq0$, and every $\epsilon>0$, the sets~\eqref{sets_of_per_orbits} have positive Liouville measure, and~\eqref{syst_ineq} holds.
\end{theorem}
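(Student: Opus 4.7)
The plan is to deduce Theorem~\ref{thm_anosov} directly from Theorem~\ref{thm_eq_2} by verifying that an Anosov Reeb flow has its normalised Liouville measure $\mu_\lambda$ approximable by periodic orbits in the sense required there. The reduction is essentially formal: the analytic heart of the matter—comparing the contact area of $\Sigma$ with the Liouville average of asymptotic intersection numbers—is already captured by Lemma~\ref{lem_action_linking} and packaged into Theorem~\ref{thm_eq_2}.

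First I would note that $\mathcal{L}_X(\lambda\wedge d\lambda)=0$, so $\mu_\lambda$ is an invariant Borel probability measure of the Reeb flow. Since the flow is Anosov and preserves the smooth volume $\lambda\wedge d\lambda$, I would appeal to the classical Anosov/Hopf argument: a $C^{1+\alpha}$ volume-preserving Anosov flow is ergodic with respect to the preserved smooth volume, and hence topologically transitive. Consequently $M$ itself is a single transitive basic set of the associated Axiom~A flow, which puts us in the setting of Sigmund's theorem~\cite{sigmund}.

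Next I would invoke the main approximation theorem in~\cite{sigmund}: for a transitive Axiom~A flow on a compact manifold, the convex hull of the normalised periodic-orbit measures is weak-$*$ dense in the space of invariant Borel probability measures. Applied to $\mu_\lambda$, this yields orbit sets $\alpha_n=\{\gamma_j^n\}$ and positive weights $p_j^n$ with $\sum_j p_j^n=1$ such that
\begin{equation}
\sum_j p_j^n\,\frac{(\gamma_j^n)_*\leb}{T(\gamma_j^n)} \longrightarrow \mu_\lambda
\end{equation}
as $n\to\infty$, which is precisely the notion of approximation of the Liouville measure by periodic orbits used in Theorem~\ref{thm_eq_2}. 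Theorem~\ref{thm_eq_2} then immediately gives that for every adapted Seifert surface $\Sigma\subset M$ with $T(\Sigma)\neq 0$ and every $\epsilon>0$ the sets~\eqref{sets_of_per_orbits} have positive Liouville measure and the inequalities~\eqref{syst_ineq} hold.

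The main (and really the only) obstacle is thus the verification of the dynamical input: checking that the Anosov hypothesis, together with the smooth invariance of $\lambda\wedge d\lambda$, really does put one inside the hypotheses of Sigmund's density theorem, and that Sigmund's convex combinations of delta-like measures on closed orbits match the paper's definition of approximation by orbit sets. Both points are entirely classical, so apart from marshalling the correct references the argument is short; the genuine analytic work has already been done upstream in Theorem~\ref{thm_eq_2} and Lemma~\ref{lem_action_linking}.
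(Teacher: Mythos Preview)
Your proposal is correct and follows essentially the same route as the paper: verify that Sigmund's density theorem for Axiom~A flows applies so that $\mu_\lambda$ is approximated by periodic orbits, then invoke Theorem~\ref{thm_eq_2}. The only cosmetic differences are that the paper cites Sigmund's stronger conclusion (approximation by measures supported on a \emph{single} periodic orbit, not merely convex combinations) and handles possibly disconnected $M$ by working component-wise, whereas your ergodicity/transitivity argument tacitly assumes $M$ is connected; neither point affects the substance.
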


\begin{proof}
By the result of Sigmund~\cite{sigmund}, any invariant measure on a basic piece of the non-wandering set of an Axiom A flow can be approximated by measures which are given by one periodic orbit. Hence the same is true for any invariant Borel probability measure of an Anosov Reeb flow on a connected compact $3$-manifold. In particular, this is true for the normalised Liouville measure on each connected component of $M$. Now apply Theorem~\ref{thm_eq_2}.
\end{proof}

\begin{remark}\label{rmk_Irie_eq}
Let $M$ be a smooth compact $3$-manifold. For $1\leq k\leq\infty$ let $\mathscr{C}_k(M)$ be the space of contact forms on $M$ of class $C^k$ such that the Reeb vector field is tangent to $\partial M$. Equip $\mathscr{C}_k(M)$ with the $C^k$-topology. This topology can be defined by a metric with respect to which every point has a complete neighbourhood. The following statement is a direct application of Theorem~\ref{thm_eq_2} combined with~\cite[Corollary~1.4]{Irie_eq}: {\it There exists a residual subset $\mathscr{R}$ of $\mathscr{C}_\infty(M)$ such that the sets~\eqref{sets_of_per_orbits} have positive Liouville measure, and~\eqref{syst_ineq} holds for every $\lambda \in \mathscr{R}$ and every adapted Seifert surface in $(M,\lambda)$ with non-zero contact area. In particular,~\eqref{syst_ineq_3sphere} holds for a $C^\infty$-generic contact form on $S^3$.}
\end{remark}

The proof of Theorem~\ref{thm_eq_2} uses the Action-Linking Lemma, which we consider of independent interest. Consider a general vector field on a compact $3$-manifold $M$ that is tangent to $\partial M$. Let $L$ be a finite collection of non-constant periodic orbits contained in $M\setminus \partial M$. The theory of asymptotic cycles provides a way to define a real-valued intersection number between an invariant Borel probability measure $\mu$ on $M\setminus L$ and a cohomology class $y \in H^1(M\setminus L;\R)$. This is defined as follows. For a recurrent point $p\in M\setminus L$ we can consider sequences $t_n\to+\infty$ such that $\phi^{t_n}(p) \to p$ and study the limits $\left<y,k(t_n,p)\right>/t_n$ as $n\to \infty$, where the loop $k(t_n,p)$ is obtained by concatenating to $\phi^{[0,t_n]}(p)$ a short path from $\phi^{t_n}(p)$ to $p$. The classical ergodic theorem implies that for $\mu$-almost all recurrent points these limits exist independently of the choice of sequence $t_n$ and of the closing short paths, and define a $\mu$-integrable function $f_{\mu,y}$; its values are determined on an invariant set of full measure with respect to $\mu$, which can be chosen to contain only recurrent points. In the complement of such a set we can define $f_{\mu,y}$ to be zero, or choose any other measurable extension. The intersection number is defined as 
\begin{equation}\label{function_int_number}
\mu \cdot y = \int_{M\setminus L} f_{\mu,y} \ d\mu
\end{equation}
For the action-linking lemma we consider the Reeb vector field defined by $\lambda$, set $L = \dot\partial\Sigma$, and take $y = \Sigma^*$ the class ``dual'' to $\Sigma$. This means that $\left<\Sigma^*,c\right> = \inter(c,\Sigma)$ holds for every $1$-cycle in $M\setminus\dot\partial\Sigma$. Finally we define $$ \inter(\lambda\wedge d\lambda,\Sigma) = \vol(\lambda) \ \left(\frac{\lambda\wedge d\lambda}{\vol(\lambda)}\right) \cdot \Sigma^* $$ where $\lambda\wedge d\lambda$ is viewed as an invariant measure.

\begin{lemma}[Action-linking lemma]\label{lem_action_linking}
The identity $$ \inter(\lambda\wedge d\lambda,\Sigma)=T(\Sigma) $$ holds for every $(M,\lambda)$ and every oriented adapted Seifert surface $\Sigma\subset M$.
\end{lemma}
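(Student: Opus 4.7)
The plan is to represent the class $\Sigma^*\in H^1(M\setminus\dot\partial\Sigma;\R)$ by a smooth closed $1$-form $\eta$, identify the asymptotic intersection pairing with $\int_M\eta(X)\,\lambda\wedge d\lambda$ via Schwartzman's correspondence, convert this to $\int_M\eta\wedge d\lambda$ by a short Cartan-calculus identity special to $3$-manifolds, and finally evaluate the result as $T(\Sigma)$ by a localisation argument in a tubular neighbourhood of $\Sigma$.

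Choose $\eta$ supported in a small tubular neighbourhood $\Sigma'\times(-\delta,\delta)$ of $\Sigma':=\Sigma\setminus\dot\partial\Sigma$, of the form $g(t)\,dt$ in the normal coordinate $t$ for a bump $g$ of unit integral; then $\int_c\eta = \inter(c,\Sigma)$ for every $1$-cycle $c$ in $M\setminus\dot\partial\Sigma$. Since $X$ is tangent to each orbit component of $\dot\partial\Sigma$, $\eta(X)$ remains bounded near $\dot\partial\Sigma$ and is therefore $(\lambda\wedge d\lambda)$-integrable. Writing $\langle\Sigma^*,k(t_n,p)\rangle = \int_0^{t_n}\eta(X)(\phi^s(p))\,ds + O(1)$, where the $O(1)$ absorbs the bounded closing segment, and invoking Birkhoff identifies $f_{\mu_\lambda,\Sigma^*}(p)$ with the time average of $\eta(X)$ along the orbit through $p$; the mean ergodic theorem then gives $\inter(\lambda\wedge d\lambda,\Sigma) = \int_M\eta(X)\,\lambda\wedge d\lambda$. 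For the Cartan step, $\eta\wedge\lambda\wedge d\lambda$ is a $4$-form on a $3$-manifold and therefore vanishes, so expanding $0 = i_X(\eta\wedge\lambda\wedge d\lambda) = (i_X\eta)(\lambda\wedge d\lambda) - \eta\wedge i_X(\lambda\wedge d\lambda)$ and using $i_X(\lambda\wedge d\lambda) = d\lambda$ (from $i_X\lambda = 1$, $i_Xd\lambda = 0$) yields the pointwise identity $\eta(X)\,\lambda\wedge d\lambda = \eta\wedge d\lambda$.

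It remains to show $\int_M\eta\wedge d\lambda = T(\Sigma)$, a form of Poincar\'e duality. By Fubini,
\begin{equation*}
\int_M\eta\wedge d\lambda = \int_{-\delta}^{\delta} g(t)\Big(\int_{\Sigma'\times\{t\}} d\lambda\Big)\,dt.
\end{equation*}
A short computation using Stokes on $\Sigma'$ gives $\tfrac{d}{dt}\int_{\Sigma'\times\{t\}}d\lambda = \int_{\partial\Sigma'} i_\nu d\lambda$, where $\nu$ is the unit normal to $\Sigma$. This boundary integral vanishes: along $\dot\partial\Sigma$ one has $d\lambda(\nu,X) = 0$ because $X$ is tangent and $i_Xd\lambda = 0$; along $\partial_b\Sigma$, clean intersection with $\partial M$ allows $\nu$ to be chosen tangent to $\partial M$, and there $d\lambda|_{\partial M} = 0$ since $X$ is a non-vanishing vector in the kernel of a $2$-form on the $2$-manifold $\partial M$. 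Hence $\int_{\Sigma'\times\{t\}}d\lambda = T(\Sigma) + O(t^2)$, and letting $\delta\to 0$ — together with representative-independence of $\int_M\eta\wedge d\lambda$, which follows from a parallel Stokes argument again using $d\lambda|_{\partial M} = 0$ — gives the identity. The main technical obstacle is this derivative calculation, and in particular the choice of $\nu$ tangent to $\partial M$ along $\partial_b\Sigma$ so that the $\partial_b\Sigma$-contribution vanishes.
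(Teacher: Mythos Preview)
Your overall strategy coincides with the paper's: represent $\Sigma^*$ by a closed $1$-form $\eta=g(t)\,dt$ supported in a product neighbourhood of $\Sigma$, use Birkhoff to write $\inter(\lambda\wedge d\lambda,\Sigma)=\int\eta(X)\,\lambda\wedge d\lambda$, and then identify this with $T(\Sigma)$. Your pointwise Cartan identity $\eta(X)\,\lambda\wedge d\lambda=\eta\wedge d\lambda$ is a tidy repackaging of what the paper does at the end via $d\lambda=i_X(\lambda\wedge d\lambda)$; once this is in hand, your Fubini/Stokes computation and the paper's $\delta\to0$ limit are two bookkeeping schemes for the same thing.

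The genuine technical content, however, sits near $\dot\partial\Sigma$, and there your write-up has a gap. With a \emph{unit} normal the product $\Sigma'\times(-\delta,\delta)$ does not give a smooth closed form on $M\setminus\dot\partial\Sigma$. In Cartesian tubular coordinates $(s,x,y)$ around a boundary orbit $c_j$ with $\Sigma=\{y=0,\,x\geq0\}$, the unit normal is $\partial_y$, the product region is $\{x>0,\,|y|<\delta\}$, and its frontier in $M\setminus c_j$ contains the strip $\{x=0,\,0<|y|<\delta\}$, across which $g(y)\,dy$ does not extend by zero continuously. Thus your $\eta$ is not a smooth representative of $\Sigma^*$, and the Birkhoff identity and the boundedness of $\eta(X)$ are unjustified as stated. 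The paper resolves precisely this by blowing up $\dot\partial\Sigma$ to boundary tori so that the closure $\hat\Sigma$ becomes compact and admits a transverse vector field equal to the \emph{angular} field $\partial_\theta$ near each $c_j$; the associated form $\beta_\delta$ is then smooth on the compact blow-up and $i_X\beta_\delta$ is automatically bounded. If you redo your argument with $\nu=\partial_\theta$ (which degenerates along $c_j$) rather than the unit normal, the tubular neighbourhood is embedded, $\eta$ is smooth on $M\setminus\dot\partial\Sigma$, and your Stokes step yields $\int_{\Sigma'_t}d\lambda\equiv T(\Sigma)$ because $\nu|_{c_j}=0$ --- note that the operative reason is the vanishing of $\nu$ there, not the identity $i_Xd\lambda=0$ you invoke. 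Your one-line justification for the boundedness of $\eta(X)$ similarly needs this angular choice together with the $O(r)$ estimate for $d\theta(X)$; this is exactly the content of the paper's Claim that the density $g$ of $\lambda\wedge d\lambda$ against the product volume tends to zero at the blown-up tori.
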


The proof of Lemma~\ref{lem_action_linking} is found in Section~\ref{sec_proof_action_link}. For an introductory exposition on the theory of asymptotic cycles we refer to~\cite{SFS}.

\begin{remark}
The Action-Linking Lemma can be seen as a version of the main result from~\cite{david} for situations where an appropriate global surface of section may not be available. However, differently from~\cite{david}, we do not handle non-periodic orbits.
\end{remark}

An elementary application is the existence of {\it wise travelers} when the Liouville measure is ergodic. The definition of wise travelers is explained in the following statement.

\begin{lemma}[Wise traveler lemma]\label{lem_wise}
Let $(M,\lambda)$ be a contact-type energy level. Assume that the Liouville measure is ergodic, and let $\Sigma_j$ be any countable collection of adapted Seifert surfaces. Then there exists a Borel subset $E\subset M\setminus \cup_j\dot\partial\Sigma_j$ of full Liouville measure with the following property: The trajectory of any point of $E$ contains the information of all numbers $T(\Sigma_j)$, in the sense that $E$ consists of recurrent points, and if $p\in E$ then for every $j$ and every sequence $t_n\to+\infty$ such that $\phi^{t_n}(p) \to p$ the following holds: $$ T(\Sigma_j) = \vol(\lambda) \lim_{n\to+\infty} \frac{1}{t_n} \ \inter(k(t_n,p),\Sigma_j). $$
\end{lemma}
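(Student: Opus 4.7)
The plan is to combine the Action-Linking Lemma with the ergodicity hypothesis and a countable-intersection argument. The Action-Linking Lemma applied to $\Sigma_j$ gives
$$\vol(\lambda)\,(\mu_\lambda \cdot \Sigma_j^*) = \inter(\lambda\wedge d\lambda,\Sigma_j) = T(\Sigma_j),$$
so the function $f_{\mu_\lambda,\Sigma_j^*}$ defined by~\eqref{function_int_number} has integral $T(\Sigma_j)/\vol(\lambda)$ with respect to $\mu_\lambda$.

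The next step is to use ergodicity. The function $f_{\mu_\lambda,\Sigma_j^*}$ is flow-invariant on the full-measure set of recurrent points where it is defined: the limit $\langle \Sigma_j^*, k(t_n,p)\rangle/t_n$ is insensitive to a time shift in $p$ since the corresponding loops differ by a bounded correction and $t_n\to+\infty$. By ergodicity of $\mu_\lambda$, this invariant integrable function is constant $\mu_\lambda$-almost everywhere, and the constant must equal its integral $T(\Sigma_j)/\vol(\lambda)$. Let $E_j\subset M\setminus\dot\partial\Sigma_j$ be the Borel subset of recurrent points where (i) the limit in~\eqref{function_int_number} exists independently of $t_n$ and of the closing short paths, and (ii) this limit equals $T(\Sigma_j)/\vol(\lambda)$. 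Then $\mu_\lambda(E_j)=1$.

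Finally, set
$$E = \left(\bigcap_{j} E_j\right) \setminus \bigcup_j \dot\partial\Sigma_j.$$
Since $\cup_j\dot\partial\Sigma_j$ is a countable union of closed Reeb orbits, it has zero Liouville measure, and $E$ is a countable intersection of full-measure Borel sets, hence also of full Liouville measure. For any $p\in E$, any $j$, and any $t_n\to+\infty$ with $\phi^{t_n}(p)\to p$, unwinding the definitions gives
$$\lim_{n\to+\infty}\frac{\inter(k(t_n,p),\Sigma_j)}{t_n} = \lim_{n\to+\infty}\frac{\langle \Sigma_j^*, k(t_n,p)\rangle}{t_n} = f_{\mu_\lambda,\Sigma_j^*}(p) = \frac{T(\Sigma_j)}{\vol(\lambda)},$$
which rearranges to the desired formula.

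No serious obstacle arises: the content is entirely in the Action-Linking Lemma, while ergodicity plus countable-intersection do the rest. The only point requiring care is the verification that $f_{\mu_\lambda,\Sigma_j^*}$ is genuinely flow-invariant almost everywhere (so that Birkhoff/ergodicity applies), which is a standard feature of the Schwartzman asymptotic-cycle construction reviewed in~\cite{SFS} and already implicit in the discussion preceding Lemma~\ref{lem_action_linking}.
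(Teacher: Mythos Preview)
Your proof is correct and follows essentially the same route as the paper: apply the Action-Linking Lemma to identify the $\mu_\lambda$-integral of $f_{\mu_\lambda,\Sigma_j^*}$, invoke flow-invariance plus ergodicity to conclude this function is $\mu_\lambda$-a.e.\ equal to the constant $T(\Sigma_j)/\vol(\lambda)$, and intersect the resulting full-measure sets over~$j$. The explicit subtraction of $\cup_j\dot\partial\Sigma_j$ is harmless but redundant, since $E_j\subset M\setminus\dot\partial\Sigma_j$ already forces $\cap_j E_j\subset M\setminus\cup_j\dot\partial\Sigma_j$.
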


On a homology $3$-sphere it follows from Lemma~\ref{lem_wise} that if the Liouville measure is ergodic then for every countable collection of periodic orbits $\gamma_j$ we can find a wise traveler whose trajectory contains the information of all $T(\gamma_j)$. For non-degenerate contact forms the set $\{\gamma_j\}$ can be all the periodic orbits.

The following is a consequence of the fact, going back to Anosov's thesis, that the Liouville measure of an Anosov Reeb flow defined on a connected $3$-manifold is ergodic.

\begin{corollary}
Wise travellers always exist for an Anosov Reeb flow on a connected $3$-manifold.
\end{corollary}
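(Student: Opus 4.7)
The plan is to invoke Lemma~\ref{lem_wise} directly: once the Liouville measure is shown to be ergodic, that lemma produces a Borel set $E\subset M\setminus\bigcup_j\dot\partial\Sigma_j$ of full Liouville measure consisting of wise travellers for any prescribed countable collection $\{\Sigma_j\}$ of adapted Seifert surfaces. Hence the entire task reduces to verifying ergodicity of $\mu_\lambda$ for an Anosov Reeb flow on a connected compact $3$-manifold.

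I would split this verification into two steps. First, I would record that $\mu_\lambda$ is $\phi^t$-invariant by the standard Cartan formula computation: from $i_X\lambda=1$ and $i_Xd\lambda=0$ one obtains $i_X(\lambda\wedge d\lambda)=d\lambda$, so that $\mathcal{L}_X(\lambda\wedge d\lambda)=d(d\lambda)=0$. Consequently $\mu_\lambda$ is a well-defined invariant Borel probability measure. Second, I would appeal to the classical theorem, going back to Anosov's thesis, that a $C^2$ Anosov flow preserving a smooth volume on a connected manifold is ergodic whenever the weak stable and weak unstable foliations are absolutely continuous. In the contact setting this absolute continuity is automatic — the invariant foliations of a contact Anosov flow are in fact transversely Lipschitz — so the Hopf argument yields ergodicity of $\mu_\lambda$ on the connected manifold $M$. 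This is precisely the fact advertised in the sentence preceding the corollary.

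There is essentially no obstacle beyond the invocation of Anosov's ergodicity theorem, which I would treat as a black-box citation (to Anosov's original monograph, or to a modern account). With ergodicity in hand, Lemma~\ref{lem_wise} applies verbatim to any countable family of adapted Seifert surfaces, producing the set $E$ of wise travellers and thereby establishing the corollary.
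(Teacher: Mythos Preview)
Your proposal is correct and follows exactly the paper's route: invoke Anosov's ergodicity theorem for the Liouville measure on a connected manifold, then apply Lemma~\ref{lem_wise}. One small caveat: absolute continuity of the stable/unstable foliations holds for any $C^2$ Anosov flow (Anosov--Sinai), so the contact structure is not needed for this step---but this does not affect the argument.
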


\begin{proof}[Proof of Lemma~\ref{lem_wise}]
Let $X$ be the Reeb vector field of $\lambda$. Denote by $\phi^t$ its flow. It is clear from its definition that $f_{\mu_\lambda,\Sigma_j^*}$ is invariant by the flow. By the ergodicity assumption we find a Borel set $Y_j \subset M\setminus \dot\partial\Sigma_j$ of full Liouville measure, contained in the set of recurrent points, such that  $$ \inter(\lambda \wedge d\lambda,\Sigma_j) = \vol(\lambda) \ f_{\mu_\lambda,\Sigma_j^*}(q) \qquad \forall q\in Y_j. $$ Since $E := \cap_j Y_j$ is still of full Liouville measure, and since by the action-linking lemma $T(\Sigma_j) = \inter(\lambda\wedge d\lambda,\Sigma_j)$, we can choose our wise traveler to be any point in~$E$.
\end{proof}

\subsection{Discussion about results on mean action}\label{ssec_HW}

Let $\D \subset \C$ be the unit disk with coordinates $z=x+iy$ and area form $\omega_0=dx\wedge dy$. Let $H:\R/\Z\times\D\to\R$ be a Hamiltonian and denote $H_t=H(t,\cdot)$. The Hamiltonian vector field $X_{H_t}$ defined by $dH_t=\omega_0(X_{H_t},\cdot)$ is assumed to be tangent to $\partial \D$, for every $t$. Denote by $\varphi^t_H$ the isotopy obtained by integrating $X_{H_t}$ starting from the identity at time zero, and denote $h=\varphi^1_H$. The isotopy defines a rotation number $\rho_H \in \R$ of $h|_{\partial\D}$.
Given a primitive $\eta$ of $\omega_0$ the associated action function is
\begin{equation}\label{action_function}
\sigma_{H,\eta}(z) = \int_{\varphi^{[0,1]}_H(z)} \eta + \int_0^1 H_t(\varphi^t_H(z))dt
\end{equation}
The Calabi invariant of $H$ is
\begin{equation}
\cal(H) = \frac{1}{\pi} \int_\D\sigma_{H,\eta} \ \omega_0
\end{equation}
which turns out to be independent of $\eta$. If $z$ is a $k$-periodic point of $h$ then its action and its mean action are defined as 
\begin{equation}\label{mean_actions}
\sigma_H(z,k) := \sum_{i=0}^{k-1} \sigma_{H,\eta}(h^i(z)) \qquad\qquad \bar \sigma_H(z) := \frac{\sigma_H(z,k)}{k}
\end{equation}
respectively. Note that $\sigma_H(z,k)$ is independent of $\eta$ and that $\bar \sigma_H(z)$ does not depend on the choice of period. Action, mean action and Calabi invariant here are equal to $\pi$ times the corresponding quantities from~\cite{mean}.

\begin{theorem}[Hutchings~\cite{mean}]\label{thm_mean}
Assume that $h(re^{i\theta})=re^{i(\theta+\rho_H)}$ near $\partial\D$, that $H_t|_{\partial\D}=0$ for all $t$ and that $\eta$ agrees with $(1/2)d\theta$ on $\partial\D$. If $\cal(H) < \frac{\rho_H}{2}$ then for every $\epsilon>0$ there is a periodic point $z$ such that $\bar \sigma_H(z) \leq \cal(H)+\epsilon$.
\end{theorem}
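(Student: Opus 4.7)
The plan is to deduce Theorem~\ref{thm_mean} from Theorem~\ref{thm_eq_2} via a mapping-torus construction. On $M = \D\times\R/\Z$ with coordinates $(z,t)$, and for a constant $C>0$ large enough to ensure $\lambda_C\wedge d\lambda_C>0$, the candidate contact form is
\[
\lambda_C = \eta + (C + H_t)\,dt.
\]
The tangency of $X_{H_t}$ to $\partial\D$ translates into tangency of the Reeb vector field $R_{\lambda_C}$ to $\partial M$, and a direct calculation identifies $R_{\lambda_C}$ with a positive reparametrisation of $\partial_t + X_{H_t}$, so interior periodic Reeb orbits are in bijection with periodic points of $h=\varphi^1_H$. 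Boundary periodic orbits sit on the torus $\partial\D\times\R/\Z$ and wind $(p,q)$ with $p/q=\rho_H/(2\pi)$; they exist only when $\rho_H\in 2\pi\Q$.

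The key step is the translation of systolic pairings into Hamiltonian actions for the oriented adapted Seifert surface $\Sigma=\D\times\{0\}$. One has $\dot\partial\Sigma=\emptyset$ and $T(\Sigma) = \int_\D d\eta = \pi$. Using $H_t|_{\partial\D}=0$ and Stokes together with a Fubini argument to recognise $\pi\,\cal(H) = 2\int_0^1\int_\D H_t\,\omega_0\,dt$, a direct computation gives $\vol(\lambda_C) = \pi\bigl(C + \cal(H)\bigr)$. For each $k$-periodic point $z$ of $h$, integrating $\lambda_C$ along the associated Reeb orbit $\gamma_z$ via the definitions in~\eqref{action_function}--\eqref{mean_actions} yields $T(\gamma_z) = k\bigl(C + \bar\sigma_H(z)\bigr)$, and clearly $\inter(\gamma_z,\Sigma) = k$. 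Therefore
\[
\rho(\gamma_z,\Sigma) = \frac{C + \cal(H)}{C + \bar\sigma_H(z)},
\]
so $\rho(\gamma_z,\Sigma) \geq 1-\delta$ is equivalent to $\bar\sigma_H(z) \leq \cal(H) + \epsilon$, with $\epsilon\searrow 0$ as $\delta\searrow 0$ at fixed $C$.

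The hypothesis $\cal(H)<\rho_H/2$ enters to rule out boundary orbits. The normalisation $\eta|_{\partial\D} = \tfrac{1}{2}d\theta$ together with the rigid rotation of $h$ near $\partial\D$ gives, for every periodic orbit $\gamma_\partial\subset\partial M$,
\[
\rho(\gamma_\partial,\Sigma) = \frac{C + \cal(H)}{C + \rho_H/2} < 1,
\]
with a gap to $1$ that is uniform in the winding $(p,q)$. Choosing $\delta$ small enough, depending only on $C$, $\cal(H)$ and $\rho_H$, forces every periodic orbit satisfying $\rho\geq 1-\delta$ to be interior, hence to correspond to a periodic point $z$ of $h$ with $\bar\sigma_H(z) \leq \cal(H) + \epsilon$. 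Applying Theorem~\ref{thm_eq_2} to $(M,\lambda_C,\Sigma)$ then completes the proof in this setup.

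The main obstacle is that Theorem~\ref{thm_eq_2} requires the Liouville measure of $(M,\lambda_C)$ to be approximated by periodic Reeb orbits. This assumption is automatic for $C^\infty$-generic Hamiltonians by Remark~\ref{rmk_Irie_eq}, which already settles the theorem in the generic case. For arbitrary smooth $H$ satisfying $\cal(H)<\rho_H/2$ the plan is to approximate by a sequence $H_n\to H$ in $C^\infty$ inside the residual set, still satisfying $\cal(H_n)<\rho_{H_n}/2$, use the uniform boundary gap to keep the periodic points $z_n$ of $h_n$ in a fixed compact subset of $\D\setminus\partial\D$, and extract a limit. Passing from the resulting $h$-invariant measure with average action $\leq \cal(H) + \epsilon/2$ to an actual periodic point of $h$ is the delicate step, and is where either a closing-lemma-type input or the ECH-based arguments of~\cite{mean} would be needed to conclude in full generality.
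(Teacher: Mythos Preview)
The paper does not prove Theorem~\ref{thm_mean}. It is stated there as Hutchings' result, with proof in~\cite{mean} via ECH. What the paper does in subsection~\ref{ssec_HW} is purely expository: it shows that \emph{if} one already knows the Liouville measure of the mapping-torus contact form $\lambda_c$ is approximated by periodic Reeb orbits, then the conclusion of Theorem~\ref{thm_mean} follows from Theorem~\ref{thm_eq_2} applied to $\Sigma=\{0\}\times\D$, and in fact without the hypothesis $\cal(H)<\rho_H/2$. Your mapping-torus setup, volume and period computations, and the identification $\rho(\gamma_z,\Sigma)=(C+\cal(H))/(C+\bar\sigma_H(z))$ reproduce exactly this discussion; the use of $\cal(H)<\rho_H/2$ to exclude boundary orbits is a correct refinement that the paper does not spell out.

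The genuine gap is the one you name yourself at the end. Theorem~\ref{thm_eq_2} requires the equidistribution hypothesis, which is not known for an arbitrary $H$. Your proposed workaround---approximate $H$ by generic $H_n$ via Remark~\ref{rmk_Irie_eq}, get periodic points $z_n$ of $h_n$ with $\bar\sigma_{H_n}(z_n)\le\cal(H_n)+\epsilon$, and pass to a limit---produces at best an $h$-invariant probability measure with action integral $\le\cal(H)+\epsilon$, not a periodic point of $h$. Promoting such a measure to a periodic point is exactly a closing-lemma statement, and in this setting that is precisely the content supplied by the ECH spectral arguments in~\cite{mean}. So your argument, as written, does not prove Theorem~\ref{thm_mean} in full generality; it recovers the paper's conditional discussion and the $C^\infty$-generic case, but the unconditional statement still needs Hutchings' input.
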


Weiler's theorem~\cite{weiler} is the analogue for maps on the annulus. It is trivial, but important, to note that if some sequence of finite collections of periodic points of $h$ becomes equidistributed with respect to $\omega_0/\pi$ then for every $\epsilon>0$ there is a periodic point~$z$ such that $\bar\sigma_H(z) \leq \cal(H)+\epsilon$, i.e. the conclusion of Theorem~\ref{thm_mean} holds with no extra assumptions. This remark can be translated to the language of contact forms and Reeb vector fields as follows. If $c$ is large then $\lambda_c = (H+c)dt+\eta$ is a contact form on $M=\R/\Z\times\D$ whose Reeb vector field $X_{\lambda_c}$ is a positive multiple of $\tilde X_H = \partial_t+X_{H_t}$. Fix such $c$. For every $z\in\D$ the integral of $\lambda_c$ along a piece of $\tilde X_H$-trajectory of the form $t\in[0,1]\mapsto (t,\varphi^t_H(z))\in M$ is equal to $\sigma_{H,\eta}(z)+c$. Hence the action (period) of a closed orbit $\gamma$ of $X_{\lambda_c}$ through a point $(0,z)$ is equal to $T(\gamma) = \int_\gamma \lambda_c = \sigma_H(z,k)+kc$ where $k = \inter(\gamma,\{0\}\times \D)$. Note also that $\vol(\lambda_c) = \pi(\cal(H) + c)$. If we assume that some sequence of orbit sets for $\lambda_c$ becomes Liouville equidistributed then we can apply Theorem~\ref{thm_eq_2} with $\Sigma = \{0\}\times\D$ to find, for every $\epsilon>0$, a periodic Reeb orbit such that
\begin{equation}
\rho(\gamma,\Sigma) \geq 1-\epsilon \qquad \Leftrightarrow \qquad \frac{1}{1-\epsilon} \cal(H) + \frac{\epsilon c}{1-\epsilon} \geq \bar\sigma_H(z)
\end{equation}
The conclusion follows from letting $\epsilon\to0$. Under the same hypothesis one can argue analogously and obtain, for any $\epsilon>0$, a periodic point such that $\bar\sigma_H \geq \cal(H) - \epsilon$. The reason why this application of Theorem~\ref{thm_eq_2} is trivial is the obvious equality between the period of a periodic point of $h$ and the intersection number between the corresponding periodic Reeb orbit of $\lambda_c$ and $\Sigma$. This follows from the fact that $\Sigma$ is a global surface of section. The role of the Action-Linking Lemma is to handle situations where a global surface of section with the appropriate boundary behaviour might not be available. \\

\noindent {\bf Acknowledgments.} We thank Marcelo Alves and Gerhard Knieper for helpful discussions, especially for pointing us to Sigmund's work. We thank Michael Hutchings for comments on an earlier version of this paper leading to Remark~\ref{rmk_H}. We thank Barney Bramham for insightful discussions. We thank the referees for pointing out corrections, for the helpful feedback, and for suggesting Question~3. U.~Hryniewicz thanks the participants of the {\it Semin\'ario sobre Campos de Beltrami} held in 2018 at UFRJ (Rio de Janeiro) for fruitful discussions. D. Bechara Senior and U.~Hryniewicz acknowledge support by the DFG SFB/TRR 191 `Symplectic Structures in Geometry, Algebra and Dynamics', Projektnummer 281071066-TRR 191. PS acknowledges the support of NYU-ECNU Institute of Mathematical Sciences at NYU Shanghai. PS is partially supported by FAPESP 2016/25053-8 and CNPq 306106/2016-7.

\section{Proof of the Action-Linking Lemma}\label{sec_proof_action_link}

The first step is to blow the periodic orbits in $\dot\partial\Sigma$ up. We get a new compact smooth $3$-manifold $\hat M$ with boundary, it is obtained by adding one boundary torus at each end of $M\setminus \dot\partial\Sigma$. The closure $\hat\Sigma$ of $\Sigma \setminus \dot\partial\Sigma$ in $\hat M$ is an embedded surface with clean intersections with the boundary. Moreover, the vector field $X$ extends smoothly to a vector field $\hat X$ on $\hat M$ that is tangent $\partial\hat M$. We refer to~\cite[Section 3]{SFS}. For the sake of completeness we provide details.

Order the periodic orbits in $\dot\partial\Sigma$ as $c_1,c_2\dots$, denote by $T_j$ the primitive period of~$c_j$. Choose an orientation preserving diffeomorphism $\Psi_j:\mathbb{R}/T_j\Z\times\D \to N_j$ onto a small compact tubular neighborhood $N_j$ of $c_j$ such that
\[
\Psi_j(t,0)=c_j(t) \qquad \text{and} \qquad \Psi_j^{-1}(N_j \cap \Sigma) = \R/T_j\Z \times [0,1] \times \{0\}
\]
On $N_j\setminus c_j$ we get tubular polar coordinates
\[
N_j\setminus c_j \simeq \R/T_j\Z \times (0,1] \times \R/2\pi\Z \qquad p = \Psi_j(t,r\cos\theta,r\sin\theta) \simeq (t,r,\theta)
\]
Define
\[
\hat M = \left. \left\{ (M\setminus\dot\partial\Sigma) \ \sqcup \ \bigsqcup_j \R/T_j\Z \times [0,1] \times \R/2\pi\Z \right\} \right/ \sim
\]
where a point in $N_j\setminus c_j$ is identified with the point in $\R/T_j\Z \times (0,1] \times \R/2\pi\Z$ given by its tubular polar coordinates. The obvious differentiable structure on $\hat M$ turns it into a compact smooth $3$-manifold with a new boundary torus $$ \T_j =  \R/T_j\Z \times \{0\} \times \R/2\pi\Z $$ for each $j$. Moreover, the closure $\hat \Sigma$ of $\Sigma \setminus \dot\partial\Sigma$ in $\hat M$ intersects $\partial\hat M$ cleanly.

Note that $\partial_\theta$ is transverse to $\Sigma$ in $N_j\setminus c_j$. Define $\epsilon_j = +1$ if $\partial_\theta$ is positively transverse, or $\epsilon_j=-1$ otherwise. We fix a choice of smooth vector field $Y$ on $\hat M$ that is positively transverse to $\hat\Sigma$, coincides with $\epsilon_j\partial_\theta$ on $\R/T_j\Z \times [0,1] \times \R/2\pi\Z$, and is also tangent to all other boundary components of $\hat M$. Using the flow of $Y$ we define a smooth diffeomorphism
\begin{equation}
F:\U \to \hat\Sigma\times [-\delta_0,\delta_0]
\end{equation}
such that $F_*Y=\partial_z$ where $\U$ is a compact neighborhood of $\hat \Sigma$ in $\hat M$, $\delta_0>0$ is small, and $z$ is the coordinate on $[-\delta_0,\delta_0]$. For every $\delta \in (0,\delta_0)$ consider a smooth function $\varphi_\delta:\R\to[0,+\infty)$ satisfying $$ \supp(\varphi_\delta) \subset [-\delta,\delta] \qquad \int_\R \varphi_\delta(z)dz=1 $$ Let $\beta_\delta$ be the $1$-form defined as $F^*(\varphi_\delta(z)dz)$ on $\U$, and $0$ on $\hat M\setminus \U$. It follows that $\beta_\delta$ is smooth on $\hat M$, $d\beta_\delta=0$ and $\beta_\delta$ and represents the dual class $\Sigma^*$ on $M\setminus\dot\partial\Sigma\subset \hat M$. Fix a positive area form $\Omega$ on $\hat\Sigma$. Define a function
\begin{equation}\label{function_g}
g: \U \cap (M\setminus\dot\partial\Sigma) = \U \setminus \cup_j\T_j \to (0,+\infty) \qquad \text{by} \qquad \lambda \wedge d\lambda = g \ \Omega\wedge dz
\end{equation}
where $\U$ gets identified with $\hat\Sigma \times [-\delta_0,\delta_0]$ via $F$. \\

\noindent {\bf Claim.} For every $j$ we have $\sup_{|z|\leq\delta_0} g(q,z) \to 0$ as $q\to \partial\hat\Sigma \cap \T_j$. \\

\noindent {\it Proof of Claim.} $\Psi_j$ induces coordinates $(t,x,y) \in \R/T_j\Z\times\D$ on $N_j$ and tubular polar coordinates $(t,r,\theta) \in \R/T_j\Z\times [0,1]\times\R/2\pi\Z$ near $\T_j$. We can write
\[
\lambda = adt+bdx+cdy \qquad d\lambda = A\ dx\wedge dy + B\ dy\wedge dt + C\ dt\wedge dx
\]
on $N_j$. Note that $a,b,c,A,B,C$ are smooth in $(t,x,y)$, hence they are also smooth in $(t,r,\theta)$ all the way up to $\{r=0\}$. We get an expression
\[
\lambda \wedge d\lambda = (aA+bB+cC) r \ dt\wedge dr\wedge d\theta
\]
Note that $(t,r)$ are smooth coordinates in $\hat \Sigma$ near $\partial\hat\Sigma \cap \T_j$. Hence we can write $\Omega = h(t,r) dt\wedge dr$ near $\partial\hat\Sigma \cap \T_j$ for some $h$ that is smooth up to $\{r=0\}$. The sign of $h$ depends on whether $dt\wedge dr$ is positive or negative on $\hat\Sigma$. Moreover, since $Y$ coincides with $\epsilon_j\partial_\theta$ near the $\T_j$, we get $$ \Omega \wedge dz = \epsilon_j h(t,r) dt\wedge dr\wedge d\theta $$ near $\T_j$, with a positive coefficient $\epsilon_j h(t,r)$ in front of $dt\wedge dr\wedge d\theta$. We finally get an expression for $g$:
\[
g = \frac{(aA+bB+cC)r}{\epsilon_j h(t,r)}
\]
which is $O(r)$ as $r\to0$ since the denominator is bounded away from zero. \qed \\

Let $\Phi_j:\R/T_j\Z\times[0,1]\times\R/2\pi\Z \to M$ be the map $\Phi_j(t,r,\theta) = \Psi_j(t,r\cos\theta,r\sin\theta)$. It is smooth and  defines a diffeomorphism $\R/T_j\Z\times(0,1]\times\R/2\pi\Z \simeq N_j\setminus c_j$ that we use to pull $X$ back to a vector field $W_j$ defined on $\R/T_j\Z\times(0,1]\times\R/2\pi\Z$. In~\cite[Section 3]{SFS} it is proved that $W_j$ admits a smooth extension to $\R/T_j\Z\times[0,1]\times\R/2\pi\Z$ that is tangent to the boundary torus $\T_j$. It follows that the vector field $X$ restricted to $M\setminus\dot\partial\Sigma$ extends smoothly to a vector field $\hat X$ on $\hat M$ tangent to~$\partial\hat M$.

Both $\hat X$ and $\beta_\delta$ are smooth objects defined on the compact manifold $\hat M$. Hence $i_{\hat X}\beta_\delta$ is bounded. Since $X$ and $\hat X$ coincide on $M \setminus \dot\partial\Sigma = \hat M \setminus \partial\hat M$, we get that $i_{X}\beta_\delta$ is bounded on $M \setminus \dot\partial\Sigma$.
It turns out that
\begin{equation}\label{inter_integral}
\inter(\lambda\wedge d\lambda,\Sigma) = \int_{M\setminus \dot\partial \Sigma} i_X\beta_\delta \ \lambda \wedge d\lambda \qquad \forall \delta \in (0,\delta_0)
\end{equation}
This is a simple application of Birkhoff's ergodic theorem; details are spelled out in~\cite[subsection~2.1]{SFS}. In particular, the integrals on the right hand side of~\eqref{inter_integral} do not depend on $\delta$. The rest of this proof consists in showing that the limit as $\delta\to0$ in the above identity is equal to $T(\Sigma)$.

Denote by $f:\U \to \R$ the smooth function $i_{\hat X}dz$. Since $\U$ is compact, its $L^\infty$-norm $\|f\|_{L^\infty(\U)}$ is finite. Denote $\hat\Sigma_0 := \hat \Sigma \setminus \cup_j\T_j = \Sigma\setminus \dot\partial\Sigma$.

Let $\epsilon>0$ be arbitrary. Choose a smooth compact domain $K \subset \hat\Sigma_0$ such that
\begin{equation}\label{set_K1}
\left| \int_{\hat\Sigma \setminus K} \Omega \right| \leq 1 \qquad\qquad \|g\|_{L^\infty(\hat\Sigma_0\setminus K \times [-\delta_0,\delta_0])}\|f\|_{L^\infty(\U)} \leq \epsilon
\end{equation}
and that
\begin{equation}\label{set_K2}
\left| \int_{\hat\Sigma_0\setminus K} f(q,0)g(q,0) \ \Omega \right| \leq \epsilon
\end{equation}
Here we used the {\it Claim} proved above. We have
\begin{equation}
\begin{aligned}
\int_{M\setminus\dot\partial\Sigma} i_{\hat X}\beta_\delta \ \lambda \wedge d\lambda &= \int_{\hat\Sigma_0 \times [-\delta,\delta]} \varphi_\delta(z) f(q,z)g(q,z) \ \Omega\wedge dz \\
&= \int_{\hat\Sigma_0 \setminus K \times [-\delta,\delta]} \varphi_\delta(z) f(q,z)g(q,z) \ \Omega\wedge dz \\
&+ \int_{K \times [-\delta,\delta]} \varphi_\delta(z) f(q,z)g(q,z) \ \Omega\wedge dz \\
&=: {\rm (I)} + {\rm (II)}
\end{aligned}
\end{equation}
where $g$ is the function~\eqref{function_g}. Using~\eqref{set_K1} we estimate
\begin{equation}\label{estimate_I}
|{\rm (I)}| \leq \epsilon \int_\R \varphi_\delta(z)dz = \epsilon
\end{equation}
To estimate (II) we write $fg$ on $K \times [-\delta_0,\delta_0]$ as
\begin{equation}
f(q,z)g(q,z) = f(q,0)g(q,0) + \Delta(q,z)
\end{equation}
for some function $\Delta:K \times [-\delta_0,\delta_0]\to\R$ satisfying
\begin{equation}
|\Delta(q,z)| \leq C|z| \qquad C = \|d(fg)\|_{L^\infty(K \times [-\delta_0,\delta_0])}
\end{equation}
It is important to note that $C$ depends on $K$, hence also on $\epsilon$, but not on $\delta$. Then we can estimate
\begin{equation}\label{estimate_II}
\begin{aligned}
& \left| {\rm (II)} - \int_{K \times [-\delta,\delta]} \varphi_\delta(z)f(q,0)g(q,0) \ \Omega\wedge dz \right| \\
&\leq C \delta \int _{K \times [-\delta,\delta]} \varphi_\delta(z) \  \Omega\wedge dz = C\delta \int_{K} \Omega \leq C\delta \int_{\hat\Sigma} \Omega
\end{aligned}
\end{equation}
We can now finally estimate using~\eqref{set_K2},~\eqref{estimate_I} and~\eqref{estimate_II}
\begin{equation*}
\begin{aligned}
& \left| \inter(\lambda\wedge d\lambda,\Sigma) - \int_{\hat\Sigma_0} f(q,0)g(q,0) \ \Omega \right| \\
& = \left| \int_{M\setminus\dot\partial\Sigma} i_X\beta_\delta \ \lambda \wedge d\lambda - \int_{\hat\Sigma_0} f(q,0)g(q,0) \ \Omega \right| \\
&\leq |{\rm (I)}| + \left| {\rm (II)} - \int_{K \times [-\delta,\delta]} \varphi_\delta(z)f(q,0)g(q,0) \ \Omega\wedge dz \right| \\
&+ \left|\int_{K \times [-\delta,\delta]} \varphi_\delta(z)f(q,0)g(q,0) \ \Omega\wedge dz - \int_{\hat\Sigma_0} f(q,0)g(q,0) \ \Omega  \right| \\
&\leq \epsilon + C\delta \int_{\hat\Sigma} \Omega + \left| \int_{\hat\Sigma_0\setminus K} f(q,0)g(q,0) \ \Omega \right|  \\
&\leq 2\epsilon + C\delta \int_{\hat\Sigma} \Omega
\end{aligned}
\end{equation*}
Letting $\delta\to0$ above we get
\[
\left| 
\inter(\lambda\wedge d\lambda,\Sigma) - \int_{\hat\Sigma_0} f(q,0)g(q,0) \ \Omega \right| \leq 2\epsilon
\]
and since $\epsilon>0$ is arbitrary we conclude that
\begin{equation}\label{almost_final}
\inter(\lambda\wedge d\lambda,\Sigma) = \int_{\hat\Sigma_0} f(q,0)g(q,0) \ \Omega
\end{equation}
The final step is to compute the integral on the right hand side of~\eqref{almost_final}. On $\U\setminus \cup_j\T_j$ we compute
\begin{equation}
d\lambda = i_X(\lambda \wedge d\lambda) = i_X(g \ \Omega\wedge dz) = fg \ \Omega + dz \wedge \nu
\end{equation}
for some $1$-form $\nu$. Since $dz$ vanishes tangentially to $\hat\Sigma$ we get 
\[
T(\Sigma) = \int_\Sigma d\lambda = \int_{(\Sigma\setminus\dot\partial\Sigma) = \hat\Sigma_0} d\lambda = \int_{\hat\Sigma_0} f(q,0)g(q,0) \ \Omega
\]
These identities and~\eqref{almost_final} imply that
\[
\inter(\lambda\wedge d\lambda,\Sigma) = T(\Sigma)
\]
The proof of the Action-Linking Lemma is complete.

\section{Proof of Theorem~\ref{thm_eq_2}}\label{sec_proof_thm}

Orient $\Sigma$ so that $T(\Sigma)>0$. Fix $\epsilon>0$ arbitrarily. Consider the set $P_\epsilon$ consisting of the union of the periodic orbits $\gamma \subset M\setminus \dot\partial\Sigma$ satisfying  $\rho(\gamma,\Sigma)\geq1-\epsilon$. We need to show that $\overline{P_\epsilon}$ has positive Liouville measure.

Let $\{\gamma^n_j\},\{p^n_j\}$ be a sequence of finite collections of periodic orbits and of weights satisfying
\begin{equation}\label{approximating_periodic_orbits}
\sum_j p^n_j = 1, \ \forall n \qquad \qquad \sum_j p^n_j \frac{(\gamma^n_j)_*{\rm Leb}}{T(\gamma^n_j)} \to \mu_\lambda \ \text{as measures.}
\end{equation}

First we prove the following general \\

\noindent \textbf{Claim.} If $U$ is an open invariant set of full Liouville measure and if we set
\begin{equation*}
J_n = \{ j \mid \gamma^n_j \subset U \}, \qquad L_n =  \sum_{j\in J_n} p^n_j
\end{equation*}
then 
\begin{equation}\label{approx_per_orbits_relative}
\lim_{n\to\infty} L_n = 1 \qquad \lim_{n\to\infty} \sum_{j\in J_n} \hat p^n_j \frac{(\gamma^n_j)_*{\rm Leb}}{T(\gamma^n_j)} = \mu_\lambda \qquad \text{with} \qquad \hat p^n_j = \frac{p^n_j}{L_n}
\end{equation}

\begin{proof}[Proof of Claim.]
To prove~\eqref{approx_per_orbits_relative} first note that by~\eqref{approximating_periodic_orbits} $$ 1 = \mu_\lambda(U) = \lim_{n\to\infty} \sum_{j\in J_n} p^n_j = \lim_{n\to\infty} L_n $$ and then compute for an arbitrary open set $V\subset M$
\begin{equation}
\begin{aligned}
\mu_\lambda(V) &= \mu_\lambda(U\cap V) \\
&= \lim_{n\to\infty} \frac{1}{L_n} \mu_\lambda(U\cap V) \\
&= \lim_{n\to\infty} \frac{1}{L_n} \sum_j \frac{p^n_j}{T(\gamma^n_j)} \ {\rm Leb}(\{t\in\R/T(\gamma^n_j)\Z \mid \gamma^n_j(t) \in U\cap V \}) \\
&= \lim_{n\to\infty} \frac{1}{L_n} \sum_{j\in J_n} \frac{p^n_j}{T(\gamma^n_j)} \ {\rm Leb}(\{t\in\R/T(\gamma^n_j)\Z \mid \gamma^n_j(t) \in U\cap V \}) \\
&= \lim_{n\to\infty} \frac{1}{L_n} \sum_{j\in J_n} \frac{L_n \hat p^n_j}{T(\gamma^n_j)} \ {\rm Leb}(\{t\in\R/T(\gamma^n_j)\Z \mid \gamma^n_j(t) \in U\cap V \}) \\
&= \lim_{n\to\infty} \sum_{j\in J_n} \frac{\hat p^n_j}{T(\gamma^n_j)} \ {\rm Leb}(\{t\in\R/T(\gamma^n_j)\Z \mid \gamma^n_j(t) \in V \})
\end{aligned}
\end{equation}
as desired, proving~\eqref{approx_per_orbits_relative}.
\end{proof}

To prove the theorem we argue by contradiction and assume that $\mu_\lambda(\overline{P_\epsilon})=0$. Hence $U_\epsilon = M \setminus (\overline{P_\epsilon}\cup\dot\partial\Sigma)$ is an open invariant set of full Liouville measure. From above \emph{Claim} applied to $U_\epsilon$ we see that there is no loss of generality to assume that $\gamma^n_j \subset U_\epsilon$ for all $n,j$. The function $f_{\mu_\lambda,\Sigma^*}$ in~\eqref{function_int_number} is defined by the limits
\begin{equation}\label{def_f_0}
f_{\mu_\lambda,\Sigma^*}(p) = \lim_{t_n\to+\infty} \frac{{\rm int}(k(t_n,p),\Sigma)}{t_n}
\end{equation}
on an invariant Borel set $E\subset M\setminus\dot\partial \Sigma$ of full Liouville measure that can be chosen to contain all the periodic orbits in $M\setminus\dot\partial\Sigma$, and satisfies
\begin{equation}\label{app_ac_li_lemma_1}
\int_{M\setminus\dot\partial\Sigma} f_{\mu_\lambda,\Sigma^*} \ d\mu_\lambda = \frac{{\rm int}(\lambda\wedge d\lambda,\Sigma)}{\vol(\lambda)} = \frac{T(\Sigma)}{\vol(\lambda)}
\end{equation}
Here we made use of Lemma~\ref{lem_action_linking}. Moreover, in the proof of Lemma~\ref{lem_action_linking} (Section~\ref{sec_proof_action_link}) it is shown that there exists a smooth closed $1$-form $\beta_\delta$ defined on $M\setminus\dot\partial \Sigma$ such that $i_X\beta_\delta$ is bounded, $\beta_\delta$ represents the class $\Sigma^*$, and
\begin{equation}
\int_{M\setminus\dot\partial\Sigma} f_{\mu_\lambda,\Sigma^*} \ d\mu_\lambda = \int_{M\setminus\dot\partial\Sigma} i_X\beta_\delta \ d\mu_\lambda
\end{equation}
Hence
\begin{equation}
\begin{aligned}
\frac{T(\Sigma)}{\vol(\lambda)} &= \int_{M\setminus\dot\partial\Sigma} f_{\mu_\lambda,\Sigma^*} \ d\mu_\lambda \\
&= \int_{M\setminus\dot\partial\Sigma} i_X\beta_\delta \ d\mu_\lambda \\
&= \int_{U_\epsilon} i_X\beta_\delta \ d\mu_\lambda \\
&= \lim_{n\to\infty} \int_{U_\epsilon} i_X\beta_\delta \ d \left( \sum_j p^n_j \frac{(\gamma^n_j)_*{\rm Leb}}{T(\gamma^n_j)} \right) \\
&= \lim_{n\to\infty} \sum_j p^n_j \ \frac{{\rm int}(\gamma^n_j,\Sigma)}{T(\gamma^n_j)}  \\
&\leq \lim_{n\to\infty} \sum_j p^n_j (1-\epsilon) \frac{T(\Sigma)}{\vol(\lambda)} = (1-\epsilon) \frac{T(\Sigma)}{\vol(\lambda)}
\end{aligned}
\end{equation}
In the third equality we used our contradiction assumption that $U_\epsilon$ has full Liouville measure. In the fifth line we used that $X$ is tangent to $\gamma^n_j$ and $\beta_\delta$ represents $\Sigma^*$. In the sixth line we used the assumption that all $\gamma^n_j$ lie in $U_\epsilon$ and satisfy $\rho(\gamma^n_j,\Sigma)<1-\epsilon$ (by the definition of $U_\epsilon$). This contradiction concludes the proof in this case.

The closure of the set of periodic orbits satisfying $\rho(\cdot,\Sigma)\leq 1+\epsilon$ can be handled analogously.

\section{Reinhardt domains}\label{sec_toric}

We prove Proposition~\ref{prop_toric}. The Reeb vector field $X$ on $(\partial W,\lambda_0)$ coincides with the Hamiltonian vector field $X_H$ defined by $-dH=i_{X_H}\omega_0$, hence 
\[
X(x_1,y_1,x_2,y_2) = 2D_1F(r_1^2,r_2^2)\partial_{\theta_1} + 2D_2F(r_1^2,r_2^2)\partial_{\theta_2}
\]
where $D_j$ denotes a partial derivative in the $j$-th variable. It follows that each torus given by a point in the curve $F=1$ in the interior of the first quadrant of the $(r_1^2,r_2^2)$-plane is invariant by the flow. Such a torus is either foliated by periodic orbits (rational torus) or does not contain periodic orbits (irrational torus). The rational tori are characterised by $\nabla F$ having commensurable coordinates, and there is a unique $(p,q) \in \Z^2$ in the complement of the (closed) third quadrant such that $\gcd(p,q)=1$ and $\nabla F = t (p,q)$ for some $t>0$. We call this torus a $(p,q)$-torus. The primitive period $T$ of the orbits in such a torus is determined by
\[
T = \frac{\pi p}{D_1F} = \frac{\pi q}{D_2F} 
\]
where the partial derivatives of $F$ are evaluated at the corresponding point $(r_1^2,r_2^2)$. In addition to the rational tori, there are two special orbits given by the points of $F=1$ in the $r_1^2$-axis and in the $r_2^2$-axis. We denote the latter by $\gamma_1=\{r_1=0\}$ and the former by $\gamma_2=\{r_2=0\}$. If $\gamma$ is a periodic orbit in a $(p,q)$-torus then 
\begin{equation}
\link(\gamma,\gamma_1)=p \qquad \link(\gamma,\gamma_2)=q
\end{equation}
In which case
\begin{equation}\label{rho_with_gamma_1}
\rho(\gamma,\gamma_1) = \frac{p2\pi^2A}{\frac{\pi p}{D_1F|_{\gamma}}\frac{\pi}{D_2F|_{\gamma_1}}} = 2A \ D_1F|_\gamma D_2F|_{\gamma_1}
\end{equation}
where $A$ is the area of the region in $\{F\leq1\}$ that lies in the first quadrant. Analogously
\begin{equation}\label{rho_with_gamma_2}
\rho(\gamma,\gamma_2) = \frac{q2\pi^2A}{\frac{\pi q}{D_2F|_{\gamma}}\frac{\pi}{D_1F|_{\gamma_2}}} = 2A \ D_2F|_\gamma D_1F|_{\gamma_2}
\end{equation}
We can also compute
\begin{equation}\label{rho_end_points}
\rho(\gamma_1,\gamma_2) = \frac{2\pi^2A}{\frac{\pi}{D_1F|_{\gamma_2}}\frac{\pi}{D_2F|_{\gamma_1}}} = 2A \ D_1F|_{\gamma_2} D_2F|_{\gamma_1}
\end{equation}
Now let $\hat\gamma$ be a periodic orbit on a $(\hat p,\hat q)$-torus. Up to relabeling, we can assume that the $(p,q)$-torus is closer to $\gamma_1$ than the $(\hat p,\hat q)$-torus, in the sense that the $(p,q)$-torus divides $\partial W$ into two solid tori, one of them contains $\gamma_1$ while the other contains $\gamma_2$ and the $(\hat p,\hat q)$-torus. Hence $\gamma$ can be homotoped to $\gamma_1^q$ and $\hat\gamma$ can be homotoped to $\gamma_2^{\hat p}$ by homotopies with disjoint images. Hence $\link(\gamma,\hat\gamma)=\hat pq$. We get
\begin{equation}\label{rho_in_the_complement}
\rho(\gamma,\hat\gamma) = \frac{\hat pq 2\pi^2A}{\frac{\pi q}{D_2F|_\gamma} \frac{\pi\hat p}{D_1F|_{\hat\gamma}}} = 2A \ D_2F|_\gamma D_1F|_{\hat\gamma}
\end{equation}

The first conclusion from the above formulas is that for ellipsoids the number $1$ belongs to the systolic interval, and the systolic norm vanishes.

Secondly, the above calculations show that the systolic norm can only vanish in the ellipsoid case. In fact,~\eqref{rho_with_gamma_1} and the $1$-homogeneity of $F$ together imply that $D_1F$ is constant in the first quadrant if the systolic norm is zero. Analogously,~\eqref{rho_with_gamma_2} and the $1$-homogeneity of $F$ together imply that $D_2F$ is constant in the first quadrant if the systolic norm is zero. Hence $F$ coincides with a linear function in the first quadrant and $W$ is an ellipsoid if the systolic norm vanishes, as desired.

Finally, let us prove for toric domains that $1$ always belongs to the systolic interval. If $\rho(\gamma_1,\gamma_2)=1$ then there is nothing to prove. We proceed assuming $\rho(\gamma_1,\gamma_2)\neq1$. Below we only argue for the case $\rho(\gamma_1,\gamma_2)>1$, since the case $\rho(\gamma_1,\gamma_2)<1$ is analogous.

Let $(x,y)$ denote coordinates in $\R^2$. By $1$-homogeneity of $F$ the domain $\{F\leq1\}$ is star-shaped in $\R^2$ and we can parametrize the part of its boundary $\{F=1\}$ in the first quadrant as a curve $$ c:\theta \in [0,\pi/2] \mapsto c(\theta) \in \{F=1\}\cap\{x\geq0,y\geq0\} $$ where $\theta$ is the polar angle. Let $S(\theta)$  be the area of the intersection of the domain $\{F\leq1\}$ with the sector $\{0\leq\arctan (y/x)\leq\theta\}$, and set $t(\theta):=2S(\theta)$. The star-shapedness of the domain implies that $t'(\theta)>0$. Hence we can invert the function $t(\theta)$ on $\{\theta\in[0,\pi/2]\}$ to obtain a function $\theta:[0,2A]\to[0,\pi/2]$. Denote $C(t) = c(\theta(t))=(x(t),y(t))$, $t\in[0,2A]$. Let $a,b>0$ be determined by $C(0)=(a,0)$, $C(2A)=(0,b)$.  Note that 
\begin{equation*}
S(\theta(t)) = \frac{1}{2} \int_0^t x(\tau)y'(\tau)-y(\tau)x'(\tau) \ d\tau.
\end{equation*}
Hence
\begin{equation}\label{infinitesimal_area}
1 = 2 \frac{d}{dt} S(\theta(t)) = 
x(t)y'(t)-y(t)x'(t).
\end{equation}
By $1$-homogeneity of $F$ we have at the point $(x(t),y(t))$
\begin{equation}\label{1-hom_differential_id}
1 = F = xD_1F + yD_2F.
\end{equation}
Note that $(y'(t),-x'(t))$ is a positive multiple of $(D_1F,D_2F)$ evaluated at $(x(t),y(t))$. By~\eqref{infinitesimal_area} and~\eqref{1-hom_differential_id} the vector $(y'(t),-x'(t))$ must coincide with $(D_1F,D_2F)$ evaluated at $(x(t),y(t))$ since both these vectors are not parallel to $(x(t),y(t))$. Hence $C(t)$ solves Hamilton's equation
\begin{equation}\label{Hamilton_toric}
x'(t) = -D_2F(x(t),y(t)), \qquad y'(t) = D_1F(x(t),y(t)).
\end{equation}

We make another use of~\eqref{1-hom_differential_id} to compute
\begin{equation}
\begin{aligned}
1 = F(C(0)) = aD_1F(a,0) &\Rightarrow D_1F(a,0) = \frac{1}{a} \\
1 = F(C(2A)) = bD_2F(0,b) &\Rightarrow D_2F(0,b) = \frac{1}{b}
\end{aligned}
\end{equation}
From~\eqref{rho_end_points} and our standing assumption $\rho(\gamma_1,\gamma_2)>1$ we get
\begin{equation}\label{standing_toric}
1 < \rho(\gamma_1,\gamma_2) = \frac{2A}{ab}
\end{equation}
Denoting by $(t_1,t_2)$ the coordinates in $[0,2A]\times[0,2A]$, we compute the average:
\begin{equation}
\begin{aligned}
& \frac{1}{4A^2} \int_{[0,2A]^2} D_1F(x(t_1),y(t_1)) D_2F(x(t_2),y(t_2)) \ dt_1dt_2 \\
&= \frac{1}{4A^2} \int_{[0,2A]^2} -y'(t_1)x'(t_2) \ dt_1dt_2 \\
&= \frac{1}{4A^2} \left( \int_0^{2A} -x'(t_2)dt_2 \right) \left( \int_0^{2A} y'(t_1)dt_1 \right) \\
&= \frac{1}{4A^2} \left( x(0)-x(2A) \right) \left( y(2A)-y(0) \right) = \frac{1}{2A} \frac{ab}{2A} < \frac{1}{2A}
\end{aligned}
\end{equation}
where~\eqref{standing_toric} was used for the last inequality. Hence we find $(t_1^*,t_2^*)\in[0,2A]^2$ such that $$ 2A \ D_1F(x(t_1^*),y(t_1^*)) \ D_2F(x(t_2^*),y(t_2^*)) < 1 $$ For each $j=1,2$ the value $t_j^*$ corresponds to an invariant torus or to one of the end orbits $\gamma_1,\gamma_2$. By continuity of the partial derivatives of $F$ in $\R^2\setminus\{(0,0)\}$ and by formulas~\eqref{rho_with_gamma_1}-\eqref{rho_in_the_complement} we find two rational tori with orbits $\gamma,\hat\gamma$ satisfying $\rho(\gamma,\hat\gamma)<1$. Hence $\rho(\gamma,\hat\gamma)<1<\rho(\gamma_1,\gamma_2)$, from where we conclude that $1$ belongs to the systolic interval.

\begin{remark}\label{rmk_rot_numbers}
The Seifert rotation number $\mathrm{rot}_0(\gamma)$ is defined to be the rotation number of the transverse linearised dynamics in a Seifert framing. For details see~\cite[subsection~2.2]{SFS}. 
In the notation above, if $\hat\gamma \to \gamma$ in the sense that the rational torus of $\hat\gamma$ approaches that of $\gamma$, then $\frac{\link(\hat\gamma,\gamma)}{\hat T/T} \to \mathrm{rot}_0(\gamma)$, and hence $$ \rho(\gamma,\hat\gamma) = \frac{\link(\hat\gamma,\gamma)\vol}{T\hat T} = \frac{\link(\hat\gamma,\gamma)}{\hat T/T} \frac{\vol}{T^2} \to \mathrm{rot}_0(\gamma) \frac{\vol}{T^2}. $$ Then, using the above formulas for Reinhardt domains, we see that the limit of $\rho(\gamma,\hat\gamma) = 2A \ D_1F|_{\hat\gamma} D_2F|_\gamma$ as $\hat\gamma \to \gamma$ is $\mathrm{rot}_0(\gamma) \vol \ T^{-2} = 2A \ D_1F|_\gamma D_2F|_\gamma$. It follows that the systolic interval and its enlarged version coincide with the interval made of values $2A \ D_1F(x_1(t),x_2(t)) D_2F(x_1(\hat t),x_2(\hat t))$ where $t,\hat t$ vary on $[0,2A]$ independently, as claimed in Remark~\ref{rmk_H}.
\end{remark}

\end{document}